\newcommand{\mcA}{\mathcal A}
\newcommand{\bA}{\mathbf A}
\newcommand{\mcB}{\mathcal B}
\newcommand{\mcC}{\mathcal C}
\newcommand{\mcD}{\mathcal D}
\newcommand{\mcE}{\mathcal E}
\newcommand{\mcH}{\mathcal H}
\newcommand{\mcK}{\mathcal K}
\newcommand{\N}{\mathbb N}
\newcommand{\Q}{\mathbb Q}
\newcommand{\R}{\mathbb R}
\newcommand{\T}{\mathbb T}
\newcommand{\Z}{\mathbb Z}
\newcommand{\FF}{\mathcal F}
\newcommand{\SL}{\mathrm{SL}}
\newcommand{\bsl}{\backslash}
\newcommand{\ol}{\overline}
\newcommand{\ul}{\underline}
\newcommand{\rar}{\rightarrow}
\newcommand{\mb}{\boldsymbol}
\newcommand{\diag}{\mathrm{diag}}
\newcommand{\Mod}[1]{\ (\mathrm{mod}\ #1)}
\DeclareMathOperator{\supp}{supp}
\newcommand{\FFA}{{\FF_\bA}}
\newcommand{\FFD}{{\FF_\Delta}}
\newcommand{\FFDl}{\FF_\Delta}
\newcommand{\TDn}{{\T_\Delta^n}}
\newcommand{\mfC}{\mathfrak C}
\newcommand{\EST}{\mathrm{EST}}
\newtheorem{theorem}{Theorem}
\newtheorem{corollary}{Corollary}
\theoremstyle{definition}
\newtheorem{remark}{Remark}
\newtheorem*{acknowledgements}{Acknowledgements}
\title[Equidistribution of Farey sequences on horospheres]{Equidistribution of Farey sequences on horospheres in covers of $\SL(n+1,\Z)\bsl\SL(n+1,\R)$ and applications}
\author{Byron Heersink}
\address{Department of Mathematics, The Ohio State University, Columbus, OH 43210}
\email{heersink.5@osu.edu}
\begin{document}

\begin{abstract}
We establish the limiting distribution of certain subsets of Farey sequences, i.e., sequences of primitive rational points, on expanding horospheres in covers $\Delta\bsl\SL(n+1,\R)$ of $\SL(n+1,\Z)\bsl\SL(n+1,\R)$, where $\Delta$ is a finite index subgroup of $\SL(n+1,\Z)$. These subsets can be obtained by projecting to the hyperplane $\{(x_1,\ldots,x_{n+1})\in\R^{n+1}:x_{n+1}=1\}$ sets of the form $\bA=\bigcup_{j=1}^J\mb{a}_j\Delta$, where for all $j$, $\mb{a}_j$ is a primitive lattice point in $\Z^{n+1}$. Our method involves applying the equidistribution of expanding horospheres in quotients of $\SL(n+1,\R)$ developed by Marklof and Str\"{o}mbergsson, and more precisely understanding how the full Farey sequence distributes in $\Delta\bsl\SL(n+1,\R)$ when embedded on expanding horospheres as done in previous work by Marklof. For each of the Farey sequence subsets, we extend the statistical results by Marklof regarding the full multidimensional Farey sequences, and solutions by Athreya and Ghosh to Diophantine approximation problems of Erd\H{o}s-Sz\"{u}sz-Tur\'an and Kesten. We also prove that Marklof's result on the asymptotic distribution of Frobenius numbers holds for sets of primitive lattice points of the form $\bA$.
\end{abstract}

\maketitle

\section{Introduction}

Let $\hat{\Z}^{n+1}=\{\mb{a}=(a_1,\ldots,a_{n+1})\in\Z^{n+1}:\gcd(a_1,\ldots,a_{n+1})=1\}$ be the set of primitive lattice points in $\Z^{n+1}$. The $n$-dimensional Farey sequence $\FF(Q)$ of level $Q\in\N$ is defined as the set of rational points $\mb{p}/q\in\R^n$ such that $(\mb{p},q)\in\hat{\Z}^{n+1}$ and $1\leq q\leq Q$. In \cite{M2}, Marklof proved certain limiting statistical properties of $\FF(Q)$ as $Q\rar\infty$ as a consequence of the equidistribution of the Farey points on expanding horospheres in a certain subspace of $\SL(n+1,\Z)\bsl\SL(n+1,\R)$, which he demonstrated in the course proving the asymptotic distribution of Frobenius numbers \cite{M1}. Fundamentally, these results follow from the equidistribution of expanding horospheres in quotients of $\SL(n+1,\R)$ by lattices obtained by Marklof and Str\"{o}mbergsson in the important work \cite{MS}. Recently, Athreya and Ghosh \cite{AG} extended and solved Diophantine approximation problems of Erd\H{o}s, Sz\"{u}sz, and Tur\'{a}n \cite{EST} and Kesten \cite{K} to, among other settings, Euclidean dimensions $n\geq2$ by finding limiting distributions of particular measures of subsets of $[0,1]^n$ which are well-approximated by certain elements in the Farey sequence.

In dimension $n=1$, Athreya and Cheung \cite{AC} provided a unified explanation for various statistical properties of Farey fractions, which were originally proven by analytic methods, by realizing the horocycle flow in $\SL(2,\R)/\SL(2,\Z)$ as a suspension flow over the BCZ map introduced by Boca, Cobeli, and Zaharescu \cite{BCZ} in their study of Farey fractions. The author's recent work \cite{H} used a process of Fisher and Schmidt \cite{FS} to lift the Poincar\'e section of Athreya and Cheung to obtain sections of the horocycle flow in covers $\SL(2,\R)/\Delta$ of $\SL(2,\R)/\SL(2,\Z)$, which in turn yielded results on the spacing statistics of the various subsets of Farey fractions related to those lifted sections.

In this paper, we extend this lifing method to higher dimensions in order to obtain results analogous to those of Marklof \cite{M2} and Athreya and Ghosh \cite{AG} for subsets of the multidimensional Farey sequences associated with finite index subgroups $\Delta\subseteq\SL(n+1,\Z)$ giving covers $\Delta\bsl\SL(n+1,\R)$ of $\SL(n+1,\Z)\bsl\SL(n+1,\R)$. Additionally, we show that the limiting distribution of Frobenius numbers established by Marklof \cite{M1} holds also for restricted sets of primitive lattice points given by orbits of $\Delta$. The method of proof mimics \cite{M1,M2} in utilizing the equidistribution of expanding horosphers and their Farey points. Specifically, we appeal to Marklof's extension of the Farey point equidistribution to general homogeneous spaces $\Gamma'\bsl\SL(n+1,\R)$ \cite{M3}. For $\Gamma'=\Delta$, we can locate the desired restricted subset of Farey points in appropriate sheets of the cover $\Delta\bsl\SL(n+1,\R)\rar\SL(n+1,\Z)\bsl\SL(n+1,\R)$, which then allows us to discern the equidistribution of the subset.

In Section \ref{Sec:Farey}, we establish notation and review the results of Marklof on the spacing statistics of the full Farey sequences \cite{M2} and the distribution of Frobenius numbers \cite{M1}, and the Diophantine results of Athreya and Ghosh \cite{AG}. We also formulate analogous results for Farey sequence subsets, and corresponding primitive lattice points, formed from the given subgroup $\Delta$. In Section \ref{Sec:Equid}, we formulate the appropriate variation of the equidistribution of horospheres and Farey points in $\Delta\bsl \SL(n+1,\R)$, following essentially from works of Marklof and Str\"{o}mbergsson \cite{M1,M3,MS}. In Section \ref{Sec:Proofs}, we prove our spacing statistics and Diophantine results for the Farey sequence subsets. Lastly, in Section \ref{Sec:Frob} we establish the limiting distribution of the Frobenius numbers for orbits of $\Delta$.

\section{Farey sequences and horospheres in $\SL(n+1,\R)$}\label{Sec:Farey}

Since $\FF(Q)$ is closed under addition by lattice points in $\Z^n$, we may view $\FF(Q)$ as a finite subset of $\T^n=\R^n/\Z^n$:
\[\FF(Q)=\left\{\frac{\mb{p}}{q}\in\T^n:(\mb{p},q)\in\hat{\Z}^{n+1},1\leq q\leq Q\right\}.\]
In \cite{M2}, Marklof considered the following statistical measures on $\FF(Q)$: For $k\in\Z_{\geq0}$ and subsets $\mcD\subseteq\T^n$ and $\mcA\subseteq\R^n$ that are bounded and have boundaries of Lebesgue measure zero and nonempty interiors, let
\begin{align}
P_Q(k,\mcD,\mcA)&=\frac{\lambda(\{\mb{x}\in\mcD:\#((\mb{x}+\sigma_Q^{-1/n}\mcA)\cap\FF(Q))=k\})}{\lambda(\mcD)}\text{ and}\label{Eq:Fstat1}\\
P_{0,Q}(k,\mcD,\mcA)&=\frac{\#\{\mb{r}\in\FF(Q)\cap\mcD:\#(\mb{r}+\sigma_Q^{-1/n}\mcA)\cap\FF(Q))=k\}}{\#(\FF(Q)\cap\mcD)}.\label{Eq:Fstat2}
\end{align}
Above and throughout this paper, $\lambda$ denotes the Lebesgue measure (by abuse of notation, on any torus or Euclidean space of any dimension) and
\begin{equation}\label{Eq:sigma}
\sigma_Q=\frac{Q^{n+1}}{(n+1)\zeta(n+1)}
\end{equation}
gives the asymptotic growth rate of $\#\FF(Q)$ as $Q\rar\infty$. The quantity $P_{0,Q}(k,\mcD,\mcA)$ is a higher dimensional analogue of the gap distribution in dimension $1$ and provides a measure for the statistical distribution of the location of the points in $\FF(Q)$ relative to one another; and the quantity $P_Q(k,\mcD,\mcA)$ measures the statistical distribution of the points $\FF(Q)$ relative to a random point in $\T^n$.

Let $G=\SL(n+1,\R)$ and $\Gamma=\SL(n+1,\Z)$, and similarly $G_0=\SL(n,\R)$ and $\Gamma_0=\SL(n,\Z)$. Then let $\mu$ denote the Haar measure on $G$ such that the $\mu$-measure of $\Gamma\bsl G$ is $1$; and view $\mu$ as a measure on other quotients $\Gamma'\bsl G$ by discrete groups $\Gamma'$ in the natural way. Similarly, let $\mu_0$ be the Haar measure on $G_0$ such that $\Gamma_0\bsl G_0$ is of $\mu_0$-measure $1$. Also define the subgroups $H\subseteq G$ and $\Gamma_H\subseteq\Gamma$ by
\begin{align*}
H&=\{M\in G:(\mb{0},1)M=(\mb{0},1)\}=\left\{\begin{pmatrix}A&\mb{b}^t\\\mb{0}&1\end{pmatrix}:A\in G_0,\mb{b}\in\R^n\right\}\text{ and}\\
\Gamma_H&=\Gamma\cap H=\left\{\begin{pmatrix}\gamma&\mb{m}^t\\\mb{0}&1\end{pmatrix}:\gamma\in\Gamma_0,\mb{m}\in\Z^n\right\}.
\end{align*}
Then let $\mu_H$ denote the Haar measure on $H$ such that the $\mu_H$-measure of $\Gamma_H\bsl H\cong\Gamma\bsl\Gamma H$ equals $1$. In \cite{M2}, $\FF(Q)$ was embedded in $\Gamma\bsl G$ as follows: Define the matrices
\[h(\mb{x})=\begin{pmatrix}I_n&\mb{0}^t\\
-\mb{x}&1\end{pmatrix}\quad\text{and}\quad
a(y)=\begin{pmatrix}y^{1/n}I_n&\mb{0}^t\\
\mb{0}&y^{-1}\end{pmatrix}\]
for $\mb{x}\in\R^n$ and $y\in(0,\infty)$. Then $h(\mb{x})$ parameterizes a horosphere which expands in $\Gamma\bsl G$ under right multiplication of $a(y)$ as $y\rar\infty$. Then a given Farey point $\mb{r}\in\FF(Q)$ of level $Q$ was associated with the element $\Gamma h(\mb{r})a(Q)\in\Gamma\bsl G$. The existence of the limit as $Q\rar\infty$ of \eqref{Eq:Fstat1} and \eqref{Eq:Fstat2} was then reduced by Marklof to showing the equidistribution of the horosphere $\{\Gamma h(\mb{x})a(Q):\mb{x}\in\T^n\}$ in $\Gamma\bsl G$, and the points $\{\Gamma h(\mb{r})a(Q):\mb{r}\in\FF(Q)\}$ in the subspace $\Gamma\bsl\Gamma H\{a(y):y\geq1\}$, respectively (see Section \ref{Sec:Proofs}). For ease of notation, denote $H_a=H\{a(y):y\geq1\}$.

We now fix a finite index subgroup $\Delta\subseteq\Gamma$, and let $\mu_\Delta$ be the Haar probability measure for $\Delta\bsl G$. We also fix elements $\mb{a}_1,\ldots,\mb{a}_J\in\hat{\Z}^{n+1}$. Then define the set of primitive lattice points
\begin{equation}\label{Eq:Aset}
\bA=\bigcup_{j=1}^J\bm{a}_j\Delta,
\end{equation}
for which we can define the set $\FFA(Q)$ consisting of the Farey points $\mb{p}/q$ such that $(\mb{p},q)\in\bA$ and $1\leq q\leq Q$. Note that since $\Delta$ is a finite index subgroup of $\SL(n+1,\Z)$, the set
\[\left\{\mb{x}\in\R^n:
\begin{pmatrix}I_n&\mb{0}^t\\\mb{x}&1\end{pmatrix}\in\Delta\right\}\]
is a sublattice $\Lambda_\Delta$ of rank $n$ in $\Z^n$. Then for $\mb{p}/q\in\FFA(Q)$ so that $(\mb{p},q)\in\mb{a}_j\Delta$ for some $j$, and $\mb{m}\in\Lambda_\Delta$, we have
\[(\mb{p}+q\mb{m},q)=(\mb{p},q)
\begin{pmatrix}I_n&\mb{0}^t\\\mb{m}&1\end{pmatrix}
\in\mb{a}_j\Delta\begin{pmatrix}I_n&\mb{0}^t\\\mb{m}&1\end{pmatrix}
=\mb{a}_j\Delta.\]
Thus $\FFA(Q)$ is closed under addition by elements in $\Lambda_\Delta$, and so we may view $\FFA(Q)$ as a subset of $\T_\Delta^n=\R^n/\Lambda_\Delta$:
\[\FFA(Q)=\left\{\frac{\mb{p}}{q}\in\T_\Delta^n:(\mb{p},q)\in\bA,1\leq q\leq Q\right\}.\]
For convenience, we define $\FF_\Delta(Q)$ to be the set of all Farey points of level $Q$, viewed as a subset of $\T_\Delta^n$, so that $\FFA(Q)\subseteq\FF_\Delta(Q)$. (Essentially, $\FF_\Delta(Q)$ is $[\Z^n:\Lambda_\Delta]$ copies of $\FF(Q)$.) Also, let $\lambda_\Delta$ denote the Lebesgue probability measure on $\T_\Delta^n$. The primary dynamical result in this paper is the equidistribution of the restricted set Farey points $\{\Delta h(\mb{r})a(Q):\mb{r}\in\FFA(Q)\}$ in a subspace of $\Delta\bsl G$. Our first application of this result, and the equidistribution of the entire horospheres in $\Delta\bsl G$, is the following:

\begin{theorem}\label{T:Stats}
The sequence $(\FFA(Q))_Q$ equidistributes in $\TDn$ with respect to the measure $\lambda_\Delta$. Also, for $k\in\Z_{\geq0}$ and bounded subsets $\mcD\subseteq\T_\Delta^n$ and $\mcA\subseteq\R^n$ with boundaries of measure zero and nonempty interiors, let
\begin{align}
P_Q^\bA(k,\mcD,\mcA)&=\frac{\lambda_\Delta(\{\mb{x}\in\mcD:\#((\mb{x}+(\#\FFA(Q))^{-1/n}\mcA)\cap\FFA(Q))=k\})}{\lambda_\Delta(\mcD)}\text{, and}\label{Eq:Astat1}\\
P_{0,Q}^\bA(k,\mcD,\mcA)&=\frac{\#\{\mb{r}\in\FFA(Q)\cap\mcD:\#(\mb{r}+(\#\FFA(Q))^{-1/n}\mcA)\cap\FFA(Q))=k\}}{\#(\FFA(Q)\cap\mcD)}.\label{Eq:Astat2}
\end{align}
Then the limit as $Q\rar\infty$ of both quantities exist and are independent of $\mcD$. Furthermore, we have the following evaluation of the limiting expected value of $k$ corresponding to the measures $P_Q^\bA(k,\mcD,\mcA)$:
\[\lim_{Q\rar\infty}\sum_{k=0}^\infty kP_Q^\bA(k,\mcD,\mcA)=\lambda_\Delta(\mcA).\]
\end{theorem}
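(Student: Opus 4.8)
The plan is to run Marklof's argument from \cite{M2} on the cover $\Delta\bsl G$, feeding in the two equidistribution statements established in Section~\ref{Sec:Equid}: \textbf{(I)} as $Q\rar\infty$ the expanding horospheres $\{\Delta h(\mb x)a(Q):\mb x\in\TDn\}$ equidistribute in $\Delta\bsl G$ with respect to $\mu_\Delta$, also when $\mb x$ is confined to a subset $\mcD$ with $\lambda_\Delta(\partial\mcD)=0$; and \textbf{(II)}, the main dynamical result, that the restricted Farey points $\{\Delta h(\mb r)a(Q):\mb r\in\FFA(Q)\}$ all lie in a fixed finite union $\mathcal S_\bA\subseteq\Delta\bsl G$ of ``sheets'' of the cover $\Delta\bsl G\rar\Gamma\bsl G$ over $\Gamma\bsl\Gamma H_a$ --- one group of sheets for each distinct coset $\mb a_j\Delta$ occurring in $\bA$ --- and equidistribute there \emph{uniformly over the base}, in the sense that for bounded $\mcD\subseteq\TDn$ with $\lambda_\Delta(\partial\mcD)=0$ and bounded $f\colon\mathcal S_\bA\rar\R$ continuous off a null set,
\[
\frac{1}{\#\FFA(Q)}\sum_{\mb r\in\FFA(Q)\cap\mcD}f(\Delta h(\mb r)a(Q))\ \xrightarrow[Q\rar\infty]{}\ \lambda_\Delta(\mcD)\int_{\mathcal S_\bA}f\,d\nu_\bA
\]
for an explicit probability measure $\nu_\bA$ on $\mathcal S_\bA$. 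The case $f\equiv1$ of this is $\#(\FFA(Q)\cap\mcD)/\#\FFA(Q)\rar\lambda_\Delta(\mcD)$, the first assertion of the theorem; combined with the count $\#\FFA(Q)\sim c_\bA\sigma_Q$ for some $c_\bA>0$ --- which follows by summing over $1\le q\le Q$ the number of admissible $\mb p$, the constraint $(\mb p,q)\in\bA$ being a finite union of congruence conditions since $\Delta$ contains a principal congruence subgroup $\Gamma(N)$ --- this settles the behaviour of $\FFA(Q)$ as a point set.

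For the two families of statistics I would set up, as in \cite{M2}, the single counting function controlling both. A direct computation with the matrices $h(\mb x)$ and $a(Q)$ gives, for $\mb x\in\TDn$,
\[
\#\bigl((\mb x+(\#\FFA(Q))^{-1/n}\mcA)\cap\FFA(Q)\bigr)=N_\bA^{(Q)}(\Delta h(\mb x)a(Q)),
\]
where $N_\bA^{(Q)}(\Delta g)$ is the number of primitive vectors of the lattice $\Z^{n+1}g$ that lie in the bounded cylinder $\{(\mb w,t):0<t\le1,\ \mb w\in t\,Q^{1+1/n}(\#\FFA(Q))^{-1/n}\mcA\}$ and whose coordinate vector in $\Z^{n+1}$ belongs to $\bA$ --- the latter being well-defined from the marking $\Delta g$, not merely the lattice $\Z^{n+1}g$, which is precisely why one passes to the cover. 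Since $\#\FFA(Q)\sim c_\bA\sigma_Q$ the dilation factor $Q^{1+1/n}(\#\FFA(Q))^{-1/n}$ tends to a constant; absorbing the resulting $(1+o(1))$ discrepancy in the dilation --- this uses $\lambda(\partial\mcA)=0$ --- one may replace $N_\bA^{(Q)}$ by the fixed function $N_\bA$ built from the limiting cylinder. This $N_\bA$ is finite everywhere and continuous away from the set of points whose lattice has a vector on the cylinder's boundary, a set that is both $\mu_\Delta$- and $\nu_\bA$-null (again by $\lambda(\partial\mcA)=0$); in particular each indicator $\mathbf 1_{\{N_\bA=k\}}$ is bounded and a.e.\ continuous on $\Delta\bsl G$ and on $\mathcal S_\bA$. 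Hence, up to $o(1)$, $P_Q^\bA(k,\mcD,\mcA)=\lambda_\Delta(\mcD)^{-1}\int_\mcD\mathbf 1_{\{N_\bA=k\}}(\Delta h(\mb x)a(Q))\,d\lambda_\Delta(\mb x)$ and $P_{0,Q}^\bA(k,\mcD,\mcA)=\#(\FFA(Q)\cap\mcD)^{-1}\sum_{\mb r\in\FFA(Q)\cap\mcD}\mathbf 1_{\{N_\bA=k\}}(\Delta h(\mb r)a(Q))$. Applying \textbf{(I)} to $\mathbf 1_{\{N_\bA=k\}}$ gives $\lim_Q P_Q^\bA(k,\mcD,\mcA)=\mu_\Delta(\{N_\bA=k\})$; applying \textbf{(II)} to $\mathbf 1_{\{N_\bA=k\}}$ and dividing by $\#(\FFA(Q)\cap\mcD)/\#\FFA(Q)\rar\lambda_\Delta(\mcD)$ gives $\lim_Q P_{0,Q}^\bA(k,\mcD,\mcA)=\nu_\bA(\{N_\bA=k\})$; both limits are independent of $\mcD$, the independence in the second case being exactly the uniformity over the base in \textbf{(II)}.

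For the expected-value identity I would bypass $N_\bA$ --- this sidesteps the uniform-integrability issue that a route through $N_\bA$ would require, since $N_\bA$ itself is unbounded on $\Delta\bsl G$ --- and compute the first moment directly. With $t_Q=(\#\FFA(Q))^{-1/n}\rar0$, interchanging the sum and the integral,
\[
\sum_{k=0}^\infty k\,P_Q^\bA(k,\mcD,\mcA)=\frac{1}{\lambda_\Delta(\mcD)}\int_\mcD\#\bigl((\mb x+t_Q\mcA)\cap\FFA(Q)\bigr)\,d\lambda_\Delta(\mb x)=\frac{1}{\lambda_\Delta(\mcD)}\sum_{\mb r\in\FFA(Q)}\lambda_\Delta\bigl(\mcD\cap(\mb r-t_Q\mcA)\bigr).
\]
For $Q$ large the set $t_Q\mcA$ lies in an arbitrarily small ball, so the $\mb r$-summand equals $t_Q^{\,n}\lambda_\Delta(\mcA)$ when $\mb r$ lies well inside $\mcD$, vanishes when $\mb r$ lies well outside $\mcD$, and is at most $t_Q^{\,n}\lambda_\Delta(\mcA)$ in between; since $(\FFA(Q))_Q$ equidistributes and $\lambda_\Delta(\partial\mcD)=0$, all but $o(\#\FFA(Q))$ of the $\mb r\in\FFA(Q)$ are of the first kind, whence the right-hand side equals $\lambda_\Delta(\mcD)^{-1}\#\FFA(Q)\,t_Q^{\,n}\,\lambda_\Delta(\mcD)\,\lambda_\Delta(\mcA)(1+o(1))$, which tends to $\lambda_\Delta(\mcA)$ because $\#\FFA(Q)\,t_Q^{\,n}=1$.

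The step I expect to be the real obstacle is the dynamical input \textbf{(II)}: identifying the correct union of sheets $\mathcal S_\bA$ --- i.e.\ translating ``$(\mb p,q)\in\bA=\bigcup_j\mb a_j\Delta$'' into a statement about which coset of $\Delta$ in $\Gamma$ contains a completion of $(\mb p,q)$ to an element of $\Gamma$ --- pinning down the limiting measure $\nu_\bA$, and proving the uniformity of the Farey-point equidistribution over the base $\TDn$. This is carried out in Section~\ref{Sec:Equid}, resting on Marklof's equidistribution of Farey points in general homogeneous spaces \cite{M1,M3} and the horosphere equidistribution of Marklof--Str\"ombergsson \cite{MS}. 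Granted \textbf{(I)} and \textbf{(II)}, the remaining ingredients --- the combinatorial identity, the finiteness and a.e.-continuity of $N_\bA$, and the null-boundary approximations --- are all routine.
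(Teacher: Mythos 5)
Your overall architecture matches the paper's: derive the limit of $P_Q^\bA$ from the horosphere equidistribution in $\Delta\bsl G$ (Theorem~\ref{T:Equid1}), the limit of $P_{0,Q}^\bA$ from the restricted Farey-point equidistribution on $\Omega_\bA$ (Theorem~\ref{T:Equid2}\eqref{T:Equid2pt2}), and the combinatorial identity relating neighbour counts in $\T_\Delta^n$ to lattice-point counts in a fixed cylinder $\mfC(\mcA)$. Your $N_\bA$, $\mathcal{S}_\bA$, and $\nu_\bA$ are the paper's $\#(\bA M\cap\mfC(\mcA))$, $\Omega_\bA$, and $\mu_{\Omega_\Delta}|_{\Omega_\bA}/\mu_{\Omega_\Delta}(\Omega_\bA)$.

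One genuine difference, in your favour: for the expected value you compute the first moment directly by a double-counting/Fubini argument and then invoke only the equidistribution of $(\FFA(Q))_Q$ in $\TDn$. The paper instead applies Theorem~\ref{T:Equid1} to the \emph{unbounded} function $(\mb{x},\Delta M)\mapsto\chi_\mcD(\mb{x})\,\#(\bA M\cap\mfC(\mcA))$ --- which, strictly, lies outside the stated hypotheses of that theorem and requires an unmentioned truncation/uniform-integrability step --- and then performs an explicit Siegel-type unfolding over $\Gamma_H\bsl G$ to evaluate the integral. Your route avoids both complications and arrives at $\lambda_\Delta(\mcA)$ from $\#\FFA(Q)\,t_Q^n=1$; it is cleaner and self-contained. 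The only care needed is choosing $\epsilon$-expansions and $\epsilon$-contractions of $\mcD$ with null boundary (generic $\epsilon$ suffices), which is routine.

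One slip to correct: you assert that $\#\FFA(Q)\sim c_\bA\sigma_Q$ ``follows by summing \ldots{} since $\Delta$ contains a principal congruence subgroup $\Gamma(N)$.'' This is false for $n=1$: $\SL(2,\Z)$ fails the congruence subgroup property, and the paper's remark after Corollary~\ref{C:1} explicitly singles this out. The paper obtains the count dynamically by applying Theorem~\ref{T:Equid2}\eqref{T:Equid2pt2} with $f\equiv1$, giving $\#\FFA(Q)/\#\FF_\Delta(Q)\to\mu_{\Omega_\Delta}(\Omega_\bA)=\#\bA^*/[\Gamma:\Delta]$; your statement (II), if normalized by $\#\FF_\Delta(Q)$ rather than $\#\FFA(Q)$, would supply the count directly with no arithmetic input at all. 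As written, your (II) presupposes the count. Replacing the congruence-subgroup aside by this dynamical derivation fixes the gap and keeps the rest of your argument intact for all $n\geq1$.
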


The main noteworthy application of Theorem \ref{T:Stats} comes from letting $\Delta=\Gamma(m)$, where $m$ is a positive integer and $\Gamma(m)\subseteq\Gamma$ is the congruence subgroup
\[\Gamma(m)=\{M\in\Gamma:M\equiv I_{n+1}\Mod{m}\}.\]
For a given $\mb{a}\in\hat{\Z}^{n+1}$, $\mb{a}\Gamma(m)=\{\mb{b}\in\hat{\Z}^{n+1}:\mb{b}\equiv\mb{a}\Mod{m}\}$. Therefore, a set of the form $\bA$ in \eqref{Eq:Aset} in this situation is the set of points in $\hat{\Z}^{n+1}$ which are congruent modulo $m$ to a vector in $\{\mb{a}_1,\ldots,\mb{a}_J\}$. As a result, the set $\FFA(Q)$ consists of the Farey points $\mb{p}/q$ such that $(\mb{p},q)\equiv\mb{a}_j\Mod{m}$ for some $j$.

\begin{remark}
Equidistribution results in the spaces $\Gamma(m)\bsl G$ have also been used in \cite[Theorem 2.1]{MS} to understand the fine-scale statistics of the directions of the visible lattice points in $\Z^n$ with respect to an observer at a rational point in $\R^n$.
\end{remark}

In \cite{EST}, Erd\H{o}s, Sz\"usz, and Tur\'{a}n introduced the following Diophantine approximation problem: For constants $Q\in\N$, $A>0$, and $c>1$, let $\EST_{A,c,Q}$ be the function on $[0,1]$ such that for $x\in[0,1]$, $\EST_{A,c,Q}(x)$ is the number of solutions $p/q\in\Q$ satisfying
\[|qx - p|\leq\frac{A}{q},\qquad Q\leq q\leq cQ.\]
Then for fixed $A$ and $c$, determine the existence of the limit
\begin{equation}\label{ESTlimit}
\lim_{Q\rar\infty}\lambda(\{x\in[0,1]:\EST_{A,c,Q}(x)>0\}).
\end{equation}
Another Diophantine problem of Kesten \cite{K} is as follows: Define the function $K_{A,Q}$ on $[0,1]$ such that $K_{A,Q}(x)$ is the number of solutions $p/q\in\Q$ satisfying
\[|qx - p|\leq\frac{A}{Q},\qquad 1\leq q\leq Q.\]
Kesten's problem is to determine the existence of the limit
\begin{equation}\label{Klimit}
\lim_{Q\rar\infty}\lambda(\{x\in[0,1]:K_{A,Q}(x)>0\}).
\end{equation}

In the original paper \cite{EST}, Erd\H{o}s, Sz\"{u}sz, and Tur\'{a}n showed that the limit \eqref{ESTlimit} exists when $A\leq c/(1+c^2)$. Later, while resolving the limit \eqref{Klimit}, Kesten \cite{K} extended the existence of \eqref{ESTlimit} for when $Ac\leq1$. The limit was then shown to exist in all cases by Kesten and S\'{o}s \cite{KS}. Explicit formulas for the limit were obtained much later by Xiong and Zaharescu \cite{XZ} and Boca \cite{B}.

In their recent work \cite{AG}, Athreya and Ghosh generalized the Erd\H{o}s-Sz\"{u}sz-Tur\'{a}n and Kesten problems to higher dimensions, in addition to other settings such as translation surfaces. They proved the existence of the limiting distribution of the functions $\EST_{A,c,Q}^n$ and $K_{A,Q}^n$ on $[0,1]^n$, analogous to the corresponding functions above, defined so that for $\mb{x}\in[0,1]^n$, $\EST_{A,c,Q}^n(\mb{x})$ is the number of solutions $(\mb{p},q)\in\hat{\Z}^{n+1}$ satisfying
\begin{equation}\label{Eq:ESTIneq}
\|q\mb{x}-\mb{p}\|\leq Aq^{-1/n},\qquad Q\leq q\leq cQ,
\end{equation}
where $\|\cdot\|$ denotes the Euclidean norm on $\R^n$; and $K_{A,Q}^n(\mb{x})$ is the number of solutions satisfying
\begin{equation}\label{Eq:KIneq}
\|q\mb{x}-\mb{p}\|\leq AQ^{-1/n},\qquad 1\leq q\leq Q.
\end{equation}
Specifically, they showed that for each $k\in\Z_{\geq0}$, the limits 
\[\lim_{Q\rar\infty}\lambda(\{\mb{x}\in[0,1]^n:\EST_{A,c,Q}^n(\mb{x})=k\})\quad\text{and}\quad\lim_{Q\rar\infty}\lambda(\{\mb{x}\in[0,1]^n:K_{A,Q}^n(\mb{x})=k\})\]
exist and can be expressed in terms of the Haar measure of certain subsets of the space of unimodular lattices $G/\Gamma$.

In this paper, we extend the results of Athreya and Ghosh to our setting, restricting the set of points $(\mb{p},q)$ in the inequalities \eqref{Eq:ESTIneq} and \eqref{Eq:KIneq} to $\bA$. That is, we define the new functions $\EST_{A,c,Q}^\bA$ and $K_{A,Q}^\bA$ on $\TDn$ so that for $\mb{x}\in\TDn$, $\EST_{A,c,Q}^\bA(\mb{x})$ is the number of solutions $(\mb{p},q)\in\bA$ satisfying \eqref{Eq:ESTIneq}, and $K_{A,Q}^\bA(\mb{x})$ is the number of solutions satisfying \eqref{Eq:KIneq}. Our second main result is the following:

\begin{theorem}\label{T:KEST}
For fixed $A>0$ and $c>1$, the limiting distributions of the functions $\EST_{A,c,Q}^\bA$ and $K_{A,Q}^\bA$ as $Q\rar\infty$ exist. More specifically, for each $k\in\Z_{\geq0}$ and $\mcD\subseteq\TDn$ with boundary of measure zero and nonempty interior, the limits
\[\lim_{Q\rar\infty}\frac{\lambda_\Delta(\{\mb{x}\in\mcD:\EST_{A,c,Q}^\bA(\mb{x})=k\})}{\lambda_\Delta(\mcD)}\quad\text{and}\quad\lim_{Q\rar\infty}\frac{\lambda_\Delta(\{\mb{x}\in\mcD:K_{A,Q}^\bA(\mb{x})=k\})}{\lambda_\Delta(\mcD)}\]
exist and are independent of $\mcD$. Also, the limiting expected values of $\EST_{A,c,Q}^\bA$ and $K_{A,Q}^\bA$ as $Q\rar\infty$ exist.
\end{theorem}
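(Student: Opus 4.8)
The plan is to express $\EST_{A,c,Q}^\bA$ and $K_{A,Q}^\bA$ as lattice-point counting functions for fixed bounded regions in $\R^{n+1}$, realize these counts as the values along the horosphere of a single function on $\Delta\bsl G$, and invoke the equidistribution of expanding horospheres in $\Delta\bsl G$ provided by Section \ref{Sec:Equid}. For $\mb{x}\in\R^n$, $q\in\N$ and $\mb{p}\in\Z^n$ one has $(\mb{p},q)h(\mb{x})a(Q)=(Q^{1/n}(\mb{p}-q\mb{x}),\,q/Q)$, so, writing $\mb{v}=(v_1,\dots,v_{n+1})\in\R^{n+1}$ and $\mb{v}'=(v_1,\dots,v_n)$, the inequalities \eqref{Eq:ESTIneq} and \eqref{Eq:KIneq} are equivalent to $(\mb{p},q)h(\mb{x})a(Q)$ lying, respectively, in the bounded sets
\[
\mfC_{\EST}=\{\mb{v}:1\le v_{n+1}\le c,\ \|\mb{v}'\|\le Av_{n+1}^{-1/n}\}\qquad\text{and}\qquad\mfC_K=\{\mb{v}:0<v_{n+1}\le1,\ \|\mb{v}'\|\le A\},
\]
each of which has boundary of Lebesgue measure zero. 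Since $\bA$ is invariant under right multiplication by $\Delta$, the translate $\bA g$ depends only on $\Delta g\in\Delta\bsl G$, so for bounded $\mfC\subseteq\R^{n+1}$ the function $N_\mfC(\Delta g)=\#(\bA g\cap\mfC)$ is well defined on $\Delta\bsl G$ and finite for every $g$ (as $\bA g\subseteq\Z^{n+1}g$ is a lattice and $\mfC$ is bounded). Moreover $\EST_{A,c,Q}^\bA(\mb{x})=N_{\mfC_{\EST}}(\Delta h(\mb{x})a(Q))$ and $K_{A,Q}^\bA(\mb{x})=N_{\mfC_K}(\Delta h(\mb{x})a(Q))$, the latter using that no nonzero vector of $\Z^{n+1}h(\mb{x})a(Q)$ has last coordinate in $(0,1/Q)$.

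Next I would record two properties of $N_\mfC$. First, a Siegel--Rogers-type identity for orbit sums over $\bA$: unfolding the sum over each $\mb{a}_j\Delta$ against $\mu_\Delta$ and using that $\SL(n+1,\R)$ acts transitively on $\R^{n+1}\setminus\{\mb{0}\}$ with Lebesgue measure as its invariant measure yields a constant $c_\bA>0$ with $\int_{\Delta\bsl G}\#(\bA g\cap S)\,d\mu_\Delta(\Delta g)=c_\bA\,\lambda(S)$ for every bounded measurable $S\subseteq\R^{n+1}$; in particular $N_\mfC\in L^1(\Delta\bsl G,\mu_\Delta)$. Second, applying this with $S=\partial\mfC$ shows that $\bA g\cap\partial\mfC=\emptyset$ for $\mu_\Delta$-almost every $\Delta g$, so each level set $\{N_\mfC=k\}$ has topological boundary inside the $\mu_\Delta$-null set $\{\Delta g:\bA g\cap\partial\mfC\ne\emptyset\}$; hence each $\mathbf{1}_{\{N_\mfC=k\}}$ is a $\mu_\Delta$-continuity function.

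With these in hand, Theorem \ref{T:KEST} follows from the equidistribution of the expanding horospheres $\{\Delta h(\mb{x})a(Q):\mb{x}\in\TDn\}$ in $\Delta\bsl G$, which for a $\mu_\Delta$-continuity set $E$ and $\mcD\subseteq\TDn$ with nonempty interior and null boundary gives $\lambda_\Delta(\mcD)^{-1}\lambda_\Delta(\{\mb{x}\in\mcD:\Delta h(\mb{x})a(Q)\in E\})\to\mu_\Delta(E)$. Taking $E=\{N_{\mfC_{\EST}}=k\}$ and $E=\{N_{\mfC_K}=k\}$ shows that the limiting distributions of $\EST_{A,c,Q}^\bA$ and $K_{A,Q}^\bA$ exist, equal $\mu_\Delta(\{N_{\mfC_{\EST}}=k\})$ and $\mu_\Delta(\{N_{\mfC_K}=k\})$, and are independent of $\mcD$. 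For the limiting expected values I would apply the same equidistribution to the bounded continuity functions $\min(N_\mfC,R)$ and then show the truncation tails $\lambda_\Delta(\mcD)^{-1}\int_\mcD(N_\mfC(\Delta h(\mb{x})a(Q))-R)_+\,d\lambda_\Delta(\mb{x})$ tend to $0$ as $R\rar\infty$ uniformly in $Q$, so that letting $R\rar\infty$ identifies the limiting expected value as $\int_{\Delta\bsl G}N_\mfC\,d\mu_\Delta=c_\bA\,\lambda(\mfC)$. This uniform tail bound is the main obstacle: it rests on a $Q$-uniform estimate for a second moment such as $\int_{\TDn}N_\mfC(\Delta h(\mb{x})a(Q))^2\,d\lambda_\Delta(\mb{x})$, obtained by expanding the square into a double sum over pairs of points of $\bA$ and integrating in $\mb{x}$, in the spirit of the first-moment computation underlying Theorem \ref{T:Stats} and of the estimates in \cite{M1,M2,AG}; the only additional care is the behaviour of $\mfC_K$ near $v_{n+1}=0$, i.e.\ the contribution of small $q$, which is negligible because there the constraint confines $\mb{p}$ to a shrinking neighbourhood of $q\mb{x}$.
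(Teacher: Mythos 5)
Your proposal is correct and follows essentially the same route as the paper: rewrite $\EST_{A,c,Q}^\bA(\mb{x})$ and $K_{A,Q}^\bA(\mb{x})$ as lattice counts $\#\bigl(\bA\,h(\mb{x})a(Q)\cap\mfC\bigr)$ for fixed bounded $\mfC\subseteq\R^{n+1}$ with null boundary (your $\mfC_{\EST},\mfC_K$ are the paper's $\mcE_{A,c},\mcK_A$), observe the level sets are $\mu_\Delta$-continuity sets, and apply Theorem~\ref{T:Equid1}. Your supporting remarks are the right ones, and they are precisely what the paper uses implicitly: the unfolding identity $\int_{\Delta\bsl G}\#(\bA g\cap S)\,d\mu_\Delta=c_\bA\lambda(S)$ is exactly the computation carried out for $\mathbb{E}P_Q^\bA$ in the proof of Theorem~\ref{T:Stats}, and applying it to $S=\partial\mfC$ gives the continuity-set property.

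For the expected values, your truncation-plus-uniform-$L^2$-tail argument is, if anything, more careful than what the paper writes: the paper simply invokes Theorem~\ref{T:Equid1} for the unbounded observable $\chi_\mcD(\mb{x})\,\#(\bA M\cap\mfC)$ and then computes its integral, without addressing that the theorem is only stated for bounded continuous test functions. You correctly identify that closing this gap requires a $Q$-uniform second-moment (Rogers-type) bound, which you have not supplied but which is standard and does hold in this setting; this is the only place your write-up falls short of a complete proof, and it is a gap the paper shares implicitly rather than a defect of your approach.
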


Analogously to Theorem \ref{T:Stats}, letting $\Delta=\Gamma(m)$ allows us to obtain limiting distributions for $\EST$ and $K$ functions corresponding to lattice points satisfying congruence conditions modulo $m$.

The original motivation for studying the distribution of Farey sequences on horospheres was in the study of Frobenius numbers. For a given $\mb{a}$ in the set $\hat{\Z}_{\geq2}^{n+1}$ of primitive integer lattice points with coordinates at least $2$, the Frobenius number $F(\mb{a})$ of $\mb{a}$ is defined as the largest natural number which cannot be represented as a non-negative integer combination of the coordinates of $\mb{a}$, that is,
\[F(\mb{a})=\max\left(\N\bsl\{\mb{m}\cdot\mb{a}:\mb{m}=(m_1,\ldots,m_{n+1})\in\Z_{\geq0}^{n+1}\}\right).\]
For $n=1$, the equality $F(\mb{a})=a_1a_2-a_1-a_2$ holds, while for higher dimensions no explicit formula is known. However, in \cite{M1}, Marklof determined the limit distribution of Frobenius numbers for $n\geq2$ by relating the values of Frobenius numbers $F(\mb{a})$ to the location of points in the Farey sequence $\FF(Q)$ when embedded in $\Gamma\bsl G$. Marklof proved \cite[Theorem 1]{M1} that there is a continuous non-increasing function $\Psi_{n+1}:\R_{\geq0}\rar\R_{\geq0}$ such that for bounded $\mcD\subseteq\R_{\geq0}^{n+1}$ with boundary of measure zero and $R\geq0$,
\[\lim_{T\rar\infty}\frac{1}{T^{n+1}}\#\left\{\mb{a}\in\hat{\Z}_{\geq2}^{n+1}\cap T\mcD:\frac{F(\mb{a})}{(a_1\cdots a_{n+1})^{1/n}}>R\right\}=\frac{\lambda(\mcD)}{\zeta(n+1)}\Psi_{n+1}(R).\]
The function $\Psi_{n+1}$ can be expressed in terms of the covering radius of the simplex
\[\delta^{(n)}=\{\mb{x}\in\R_{\geq0}^n:\mb{x}\cdot\mb{e}\leq1\},\qquad\mb{e}=(1,1,\ldots,1),\]
with respect to lattices in $\Gamma_0\bsl G_0$. The covering radius of $\delta^{(n)}$ with respect to $\Gamma_0A\in\Gamma_0\bsl G_0$ is the quantity $\rho(\Gamma_0A)$ defined by
\begin{equation}\label{Eq:covrad}
\rho(\Gamma_0A)=\inf\{\rho'>0:\Z^nA+\rho'\delta^{(n)}=\R^n\}.
\end{equation}
We then have \cite[Theorem 2]{M1}
\[\Psi_{n+1}(R)=\mu_0(\{A\in\Gamma_0\bsl G_0:\rho(A)>R\}).\]
The last main result of this paper is to prove that the distribution of Frobenius numbers according to $\Psi_{n+1}$ continues to hold when restricting the lattice points one considers to $\bA$.

\begin{theorem}\label{T:Fr}
Assume $n\geq2$ and let $\mcD\subseteq\R_{\geq0}^{n+1}$ be bounded with boundary of measure zero. Then there exists a positive integer $i_\bA\leq[\Gamma:\Delta]$, depending on $\bA$, such that
\begin{equation}\label{Eq:Fr}
\lim_{T\rar\infty}\frac{1}{T^{n+1}}\#\left\{\mb{a}\in\hat{\Z}_{\geq2}^{n+1}\cap\bA\cap T\mcD:\frac{F(\mb{a})}{(a_1\cdots a_{n+1})^{1/n}}>R\right\}=\frac{i_\bA\lambda(\mcD)}{[\Gamma:\Delta]\zeta(n+1)}\Psi_{n+1}(R).
\end{equation}
\end{theorem}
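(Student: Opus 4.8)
The plan is to reduce the restricted Frobenius count to Marklof's unrestricted result \cite{M1} by tracking which sheets of the cover $\Delta\bsl G\rar\Gamma\bsl G$ the relevant Farey points land in. Recall from \cite{M1} that the value $F(\mb{a})/(a_1\cdots a_{n+1})^{1/n}$ for $\mb{a}\in\hat{\Z}_{\geq2}^{n+1}$ is governed by the covering radius $\rho$ evaluated at a point in $\Gamma_0\bsl G_0$ determined by the position of a corresponding Farey point embedded on the horosphere; more precisely, $\mb{a}\in T\mcD$ with large norm corresponds, after rescaling, to a point $\Gamma h(\mb{r})a(Q)\in\Gamma\bsl\Gamma H_a$ together with the data of how far along the cusp we are, and the condition on the normalized Frobenius number becomes a condition that $\rho$ of the associated $\SL(n,\R)$-lattice exceeds $R$. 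First I would recall this correspondence in the form Marklof uses, then lift it: the point $\mb{a}\in\bA$ means $\mb{a}\in\mb{a}_j\Delta$ for some $j$, so the natural lift of the embedded Farey point sits in the sheet of $\Delta\bsl\Gamma h(\mb{r})a(Q)$ indexed by the coset of $\Delta$ containing the relevant group element.

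The key step is then to invoke the equidistribution of the restricted Farey points $\{\Delta h(\mb{r})a(Q):\mb{r}\in\FFA(Q)\}$ in the appropriate subspace of $\Delta\bsl G$ — the primary dynamical result underlying Theorems \ref{T:Stats} and \ref{T:KEST}, which in turn rests on Marklof's extension \cite{M3} of Farey point equidistribution to $\Gamma'\bsl G$ with $\Gamma'=\Delta$. Because the fiber of $\Delta\bsl G\rar\Gamma\bsl G$ over a point has $[\Gamma:\Delta]$ elements, and because $\bA$ is a union of $J$ right $\Delta$-orbits, the restricted Farey points occupy a definite union of sheets; by equidistribution, the proportion of the horosphere mass they carry is the ratio of the number of occupied sheets to $[\Gamma:\Delta]$. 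I would define $i_\bA$ to be precisely this number of sheets — equivalently, $i_\bA=[\Gamma:\Delta]$ times the $\mu_\Delta$-measure of the closure of $\bigcup_j \Delta h(\mb{r})a(Q)$-orbit-space hit by $\bA$, which is an integer between $1$ and $[\Gamma:\Delta]$ and depends only on $\bA$ (not on $\mcD$, $R$, or $T$). The point is that the function $\rho$ (and hence the indicator of $\{\rho>R\}$) is pulled back from $\Gamma_0\bsl G_0$, so it is constant along fibers of the cover; integrating it against the equidistributing restricted Farey measure gives $i_\bA/[\Gamma:\Delta]$ times the integral against the full Farey measure, which is exactly $\Psi_{n+1}(R)$ up to the geometric factor $\lambda(\mcD)/\zeta(n+1)$ already present in \cite[Theorem 1]{M1}.

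Concretely, I would carry out the steps in this order: (i) recall Marklof's parametrization expressing the normalized Frobenius counting function over $\hat{\Z}_{\geq2}^{n+1}\cap T\mcD$ as an integral/sum over Farey points against a test function built from $\mathbf 1\{\rho>R\}$, with the asymptotics as $T\rar\infty$ controlled by the equidistribution of $\{\Gamma h(\mb{r})a(Q)\}$ in $\Gamma\bsl\Gamma H_a$; (ii) observe that restricting $\mb{a}$ to $\bA$ restricts the Farey points to $\FFA(Q)$, and lift the whole picture to $\Delta\bsl G$; (iii) apply the equidistribution of $\{\Delta h(\mb{r})a(Q):\mb{r}\in\FFA(Q)\}$ to pass to the limit, using that the test function is $\Delta\bsl G\rar\Gamma\bsl G$-fiberwise constant; (iv) identify the resulting constant as $i_\bA/[\Gamma:\Delta]$ with $i_\bA\in\{1,\ldots,[\Gamma:\Delta]\}$. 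The main obstacle I anticipate is step (ii)–(iii): one must check that Marklof's reduction in \cite{M1} — which involves a somewhat delicate thickening/truncation argument near the cusp of $\Gamma\bsl G$ and an application of a non-divergence/tail estimate to justify dominated convergence — goes through verbatim after lifting to $\Delta\bsl G$, i.e. that the boundary-measure-zero and integrability hypotheses are preserved and that the finite cover introduces no new cusps or convergence issues. Since $\Delta\bsl G\rar\Gamma\bsl G$ is a finite covering map, the requisite estimates should transfer directly, but verifying this carefully — in particular that the escape-of-mass bounds used by Marklof and Strömbergsson hold uniformly over the finitely many sheets — is where the real work lies.
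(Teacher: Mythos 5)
Your proposal follows essentially the same route as the paper's proof. The paper makes the argument precise by first establishing Theorem~\ref{T:Equid3} (the restricted-Farey analogue of Marklof's Theorem~7 in \cite{M1}, giving joint equidistribution of $(\mb{a}/T,\Delta h(\mb{a}'/a_{n+1})a(T))$ over $\mb{a}\in\bA\cap T\mcD$ in $\ol{\mcD}\times\Delta\bsl\bA^*H_a$ against the pushforward of $\lambda\times\mu_H$), then applying it to the characteristic function of $\mcA_{\Delta,\bA,R}$; the constant $i_\bA$ is identified as $\#\bA^*$, and the factor $\#\bA^*/[\Gamma:\Delta]$ emerges exactly as you describe because $\rho\circ\pi_0$ descends through the cover $\Delta\bsl G\rar\Gamma\bsl G$. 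One correction to your closing worry: no re-verification of Marklof--Str\"{o}mbergsson escape-of-mass bounds across the sheets is needed, since Theorem~\ref{T:Equid1} is stated and proved directly for $\Delta\bsl G$ (via mixing, noting $\Delta$ has finite covolume), and Theorem~\ref{T:Equid3} is then derived from the proof of Theorem~\ref{T:Equid2}\eqref{T:Equid2pt1} by an approximation/partition argument carried out entirely inside $\Delta\bsl G$, so the finite cover introduces no new non-divergence issues.
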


We record the special case of Theorems \ref{T:Stats}, \ref{T:KEST}, and \ref{T:Fr} for $\Delta=\Gamma(m)$, in which case $\bA$ is the set of points in $\hat{\Z}^{n+1}$ satisfying certain congruence conditions modulo $m$, in the following corollary.

\begin{corollary}\label{C:1}
Fix $\mb{a}_1,\ldots,\mb{a}_J\in\hat{\Z}^{n+1}$ and let $\bA$ be the set of points $\mb{a}\in\hat{\Z}^{n+1}$ such that $\mb{a}\equiv\mb{a}_j\Mod{m}$ for some $j$. Then let $\FFA(Q)\subseteq(\R/m\Z)^n$ be the set of Farey points $\mb{p}/q$ such that $(\mb{p},q)\in\bA$. 
\begin{enumerate}[(a)]
\item The sequence $(\FFA(Q))_Q$ equidistributes with respect to the Lebesgue probability measure $\lambda_{\Gamma(m)}$ on $(\R/m\Z)^n$; and the limit as $Q\rar\infty$ of the spacing statistics $P_Q^\bA$ and $P_{0,Q}^\bA$ defined by \eqref{Eq:Astat1} and \eqref{Eq:Astat2}, respectively, exist for all $k\in\Z_{\geq0}$ and bounded subsets $\mcD\subseteq(\R/m\Z)^n$ and $\mcA\subseteq\R^n$ with boundaries of measure zero and nonempty interiors. Also, the expected value of $k$ corresponding to the measure $P_Q^\bA$ exists and equals $\lambda_{\Gamma(m)}(\mcA)$.\label{C:1a}
\item For $\mb{x}\in(\R/m\Z)^n$, define $\EST_{A,c,Q}^\bA(\mb{x})$ to be the number of solutions $(\mb{p},q)\in\bA$ satisfying \eqref{Eq:ESTIneq}, and $K_{A,Q}^\bA(\mb{x})$ the number of solutions satisfying \eqref{Eq:KIneq}. Then for $A$, $c$, and $\mcD$ as in Theorem \ref{T:KEST}, the limiting distributions and expectations for the functions $\EST_{A,c,Q}^\bA$ and $K_{A,Q}^\bA$ as $Q\rar\infty$ exist, and do not depend on the restricting subset $\mcD$.\label{C:1b}
\item Assume that $n\geq2$ and $\mcD\subseteq\R_{\geq0}^{n+1}$ is bounded and has boundary of measure zero. Then \eqref{Eq:Fr} holds for $\Delta=\Gamma(m)$ and some positive integer $i_\bA\leq[\Gamma:\Gamma(m)]$.
\end{enumerate}
\end{corollary}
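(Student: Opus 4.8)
\textbf{Proof proposal for Corollary \ref{C:1}.}
The plan is to obtain the corollary as a direct specialization of Theorems \ref{T:Stats}, \ref{T:KEST}, and \ref{T:Fr} to the case $\Delta=\Gamma(m)$, together with the elementary identification of the relevant objects in that case. The first step is to record the lattice description: for $\Delta=\Gamma(m)$ one has
\[\Lambda_\Delta=\left\{\mb{x}\in\R^n:\begin{pmatrix}I_n&\mb{0}^t\\\mb{x}&1\end{pmatrix}\in\Gamma(m)\right\}=m\Z^n,\]
so that $\TDn=\R^n/m\Z^n=(\R/m\Z)^n$ and $\lambda_\Delta=\lambda_{\Gamma(m)}$ is just the normalized Lebesgue measure on this torus. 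Next I would verify the congruence description of $\bA$: since $\Gamma(m)$ is the kernel of reduction mod $m$ acting on the right on row vectors, and reduction mod $m$ maps $\hat{\Z}^{n+1}$ onto the primitive vectors of $(\Z/m\Z)^{n+1}$ with fibers that are exactly single cosets of $\Gamma(m)$ (because $\SL(n+1,\Z)\to\SL(n+1,\Z/m\Z)$ is surjective and acts transitively on primitive vectors), one gets $\mb{a}\Gamma(m)=\{\mb{b}\in\hat{\Z}^{n+1}:\mb{b}\equiv\mb{a}\Mod m\}$. Hence $\bA=\bigcup_{j=1}^J\mb{a}_j\Gamma(m)$ is exactly the set of $\mb{a}\in\hat{\Z}^{n+1}$ congruent mod $m$ to some $\mb{a}_j$, and $\FFA(Q)$ is precisely the set of Farey points $\mb{p}/q$ with $(\mb{p},q)\in\bA$, viewed in $(\R/m\Z)^n$. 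This is the content already spelled out in the paragraph preceding Theorem \ref{T:KEST} in the excerpt, so the step is essentially a matter of citing it.

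Once these identifications are in place, parts (a), (b), (c) are nothing more than Theorems \ref{T:Stats}, \ref{T:KEST}, and \ref{T:Fr} read with $\Delta=\Gamma(m)$. For part (a): Theorem \ref{T:Stats} gives equidistribution of $(\FFA(Q))_Q$ in $\TDn$ with respect to $\lambda_\Delta$, the existence and $\mcD$-independence of the limits of $P_Q^\bA(k,\mcD,\mcA)$ and $P_{0,Q}^\bA(k,\mcD,\mcA)$ for all admissible $k,\mcD,\mcA$, and the value $\lambda_\Delta(\mcA)$ for the limiting expectation of $k$ under $P_Q^\bA$; substituting $\TDn=(\R/m\Z)^n$ and $\lambda_\Delta=\lambda_{\Gamma(m)}$ yields (a) verbatim. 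For part (b): with the functions $\EST_{A,c,Q}^\bA$ and $K_{A,Q}^\bA$ on $\TDn$ being exactly the functions on $(\R/m\Z)^n$ described in (b) (counting solutions $(\mb{p},q)\in\bA$ of \eqref{Eq:ESTIneq} and \eqref{Eq:KIneq}), Theorem \ref{T:KEST} immediately gives the existence and $\mcD$-independence of the limiting distributions and the existence of the limiting expectations. For part (c): Theorem \ref{T:Fr} applied to $\Delta=\Gamma(m)$ produces a positive integer $i_\bA\leq[\Gamma:\Gamma(m)]$ for which \eqref{Eq:Fr} holds, which is exactly the assertion of (c).

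Thus there is essentially no new mathematical obstacle: the only things to check are the two elementary facts $\Lambda_{\Gamma(m)}=m\Z^n$ and $\mb{a}\Gamma(m)=\{\mb{b}\in\hat{\Z}^{n+1}:\mb{b}\equiv\mb{a}\Mod m\}$, the latter following from surjectivity of $\SL(n+1,\Z)\to\SL(n+1,\Z/m\Z)$ and transitivity of its action on primitive vectors mod $m$. The mild point worth a sentence is that the hypotheses of the three theorems (boundedness, boundary of measure zero, nonempty interior for $\mcD$ and $\mcA$) transfer directly since $(\R/m\Z)^n$ is just a rescaled torus. If anything deserves care it is making sure the normalization conventions match — that $\lambda_{\Gamma(m)}$ in the corollary is the \emph{probability} measure on $(\R/m\Z)^n$, consistent with $\lambda_\Delta$ in Theorem \ref{T:Stats} — which it is by definition. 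I would therefore write the proof of Corollary \ref{C:1} as: establish the two lattice/coset identities, then invoke Theorems \ref{T:Stats}, \ref{T:KEST}, and \ref{T:Fr} in turn.
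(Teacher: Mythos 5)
Your proposal is correct and matches the paper's treatment: the corollary is recorded there precisely as the specialization of Theorems \ref{T:Stats}, \ref{T:KEST}, and \ref{T:Fr} to $\Delta=\Gamma(m)$, using the same identifications $\Lambda_{\Gamma(m)}=m\Z^n$ (so $\TDn=(\R/m\Z)^n$) and $\mb{a}\Gamma(m)=\{\mb{b}\in\hat{\Z}^{n+1}:\mb{b}\equiv\mb{a}\Mod{m}\}$ that the paper states before Theorem \ref{T:KEST}. Your added justification of the coset identity via surjectivity of $\SL(n+1,\Z)\rar\SL(n+1,\Z/m\Z)$ and transitivity on primitive vectors is exactly the standard argument the paper leaves implicit, so nothing further is needed.
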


\begin{remark}
For $n\geq2$, Theorems \ref{T:Stats}, \ref{T:KEST}, and \ref{T:Fr} can be essentially reduced to Corollary \ref{C:1} by the congruence subgroup property of $\SL(n+1,\Z)$ \cite{BLS,Me}. However, this is not the case for $n=1$ in Theorems \ref{T:Stats} and \ref{T:KEST}.
\end{remark}

\begin{remark}
Marklof's result \cite{M1} was improved in \cite{L}, where Li obtained an effective equidistribution result for the Farey points $\{\Gamma h(\mb{r})a(Q):\mb{r}\in\FF(Q)\}$, as well as the horosphere $\{\Gamma h(\mb{x})a(Q):\mb{x}\in\T^n\}$, which in turn allowed him to obtain an error term for the limit distribution of Frobenius numbers. Einsiedler et al.\ \cite{EMSS} made another advancement by proving the equidistribution of the Farey points in $\Gamma\bsl G$ corresponding to the elements $\mb{p}/q\in\FF(Q)$ such that $q=Q$. (They additionally obtained analogous equidistribution results for horospheres in $\SL(n+1,\R)$ taking up more than one matrix row.) This result was then made effective in dimension $n=2$ by Lee and Marklof \cite{LM}.
\end{remark}

\section{Equidistribution in $\Delta\bsl G$}\label{Sec:Equid}

We now set out to prove Theorems \ref{T:Stats}, \ref{T:KEST}, and \ref{T:Fr}, appealing to the equidistribution results of Marklof and Str\"{o}mbergsson \cite{M1,M2,M3,MS}. We shall derive the limit of the measure $P_Q^\bA$, and the limiting distributions of $\EST_{A,c,Q}^\bA$ and $K_{A,Q}^\bA$, from the equidistribution of expanding horospheres in $\Delta\bsl G$. We specifically use the following adaptation of \cite[Theorem 5.8]{MS} (see also \cite[Theorem 1]{M2}).

\begin{theorem}\label{T:Equid1}
Let $f:\TDn\times\Delta\bsl G\rar\R$ be bounded and continuous. Then
\[\lim_{Q\rar\infty}\int_{\T_\Delta^n}f(\mb{x},h(\mb{x})a(Q))\,d\lambda_\Delta(\mb{x})=\int_{\T_\Delta^n\times\Delta\bsl G}f(\mb{x},M)\,d\lambda_\Delta(\mb{x})\,d\mu_\Delta(M).\]
\end{theorem}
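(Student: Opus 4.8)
The plan is to deduce this from the already-cited equidistribution of expanding horospheres in $\Gamma\bsl G$, i.e. \cite[Theorem 5.8]{MS} / \cite[Theorem 1]{M2}, by pushing everything up to the finite cover $\Delta\bsl G$. First I would observe that $\TDn = \R^n/\Lambda_\Delta$ is a finite cover of $\T^n = \R^n/\Z^n$ of degree $N := [\Z^n:\Lambda_\Delta]$, and that the natural projection $\Delta\bsl G \to \Gamma\bsl G$ has finite degree $[\Gamma:\Delta]$. The measure $\lambda_\Delta$ on $\TDn$ is (by construction) the normalized lift of $\lambda$ on $\T^n$, and likewise $\mu_\Delta$ is the normalized lift of $\mu$; this compatibility of normalizations is what will make the constants come out right.

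Next I would reduce to a convenient class of test functions. Since $f$ is bounded and continuous and both target spaces are (at worst) locally compact, a standard density / Stone--Weierstrass argument lets me take $f$ of product form $f(\mb{x},M) = \varphi(\mb{x})\psi(M)$ with $\varphi\in C(\TDn)$ and $\psi\in C_b(\Delta\bsl G)$, and indeed it suffices to treat $\varphi$ a character of $\TDn$ together with finitely supported-in-cover data; more simply, one can just approximate $f$ uniformly on the relevant compact sets by finite sums of such products and control the tails using that $h(\mb{x})a(Q)$ equidistributes (so no mass escapes to infinity — this is exactly the non-escape-of-mass statement contained in \cite{M2,MS} for $\SL(n+1,\R)$). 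Thus the theorem for general $f$ follows once it is known for $f=\varphi\otimes\psi$.

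For $f = \varphi\otimes\psi$ I would split $\TDn$ into the $N$ cosets of a fundamental domain for $\T^n$ inside $\TDn$: writing $\R^n = \bigsqcup_{i} (F_0 + \mb{v}_i) + \Z^n$ for coset representatives $\mb{v}_1,\dots,\mb{v}_N$ of $\Lambda_\Delta$ in $\Z^n$, the integral over $\TDn$ becomes $\tfrac1N\sum_i$ of integrals of $\varphi(\mb{x}+\mb{v}_i)\psi(h(\mb{x}+\mb{v}_i)a(Q))$ over a fundamental domain $F_0$ for $\T^n$. Now the key algebraic point: $h(\mb{x}+\mb{v}_i)a(Q) = \gamma_i\, h(\mb{x})\, a(Q)$ modulo the appropriate lattice, where the integer translation $\mb{v}_i\in\Z^n$ moves the basepoint within the $\Gamma$-orbit but not within the $\Delta$-orbit; equivalently, left-translating $h(\mb{x})$ by the element of $\Gamma$ corresponding to $\mb{v}_i$ permutes the sheets of the cover $\Delta\bsl G\to\Gamma\bsl G$ in a way that realizes, as $i$ ranges over the $N$ cosets, every sheet the expected number of times. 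Therefore each summand, viewed downstairs in $\Gamma\bsl G$, is an integral of $\varphi(\mb{x}+\mb{v}_i)\,\tilde\psi_i(h(\mb{x})a(Q))$ for a suitable continuous bounded $\tilde\psi_i$ on $\Gamma\bsl G$ (namely $\psi$ pulled back along the sheet-permutation), to which \cite[Theorem 5.8]{MS} applies directly, giving in the limit $\int_{\T^n}\varphi(\mb{x}+\mb{v}_i)\,d\lambda(\mb{x})\cdot\int_{\Gamma\bsl G}\tilde\psi_i\,d\mu$. Summing over $i$ and using that $\sum_i \tilde\psi_i$ reassembles $\psi$ integrated against $\mu_\Delta$ on the cover (each sheet counted with the right multiplicity, normalizations matching), one recovers exactly $\int_{\TDn}\varphi\,d\lambda_\Delta\cdot\int_{\Delta\bsl G}\psi\,d\mu_\Delta$, as claimed.

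The main obstacle — and the step that deserves the most care — is the bookkeeping in the previous paragraph: making precise the identification $h(\mb{x}+\mb{v}_i)a(Q) = \gamma_i h(\mb{x})a(Q)$ in $\Delta\bsl G$, tracking exactly which sheet of $\Delta\bsl G\to\Gamma\bsl G$ each coset $\mb{v}_i$ lands in, and checking that as $i$ runs over a transversal of $\Lambda_\Delta$ in $\Z^n$ the induced sheet-assignments hit each coset $\Delta\bsl\Gamma$ with uniform multiplicity (this is where the relation $\Lambda_\Delta = \{\mb{x}\in\Z^n : h(-\mb{x})\in\Delta\}$, already noted in the excerpt, enters). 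Once that combinatorial identity is established, the analytic input is entirely off-the-shelf from \cite{MS,M2}, and the normalization of $\mu_\Delta$ and $\lambda_\Delta$ as probability measures on the covers guarantees the constants agree without any further factors.
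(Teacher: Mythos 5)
Your proposal takes a different route from the paper's, and unfortunately it has a genuine gap. The paper discharges Theorem~\ref{T:Equid1} by invoking mixing of the diagonal flow $\{a(y)\}$ on $\Delta\bsl G$ directly (via the Eskin--McMullen argument \cite{EM}); it does \emph{not} attempt to deduce equidistribution in the cover from equidistribution in the base. That choice is essential, and your reduction runs into two problems.

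First, the function $\tilde\psi_i$ you need does not exist. You want a bounded continuous $\tilde\psi_i$ on $\Gamma\bsl G$ with $\tilde\psi_i(\Gamma h(\mb{x})a(Q))=\psi(\Delta\gamma_i h(\mb{x})a(Q))$ for $\mb{x}$ in a fundamental domain $F_0$ for $\T^n$; but the expression $\Gamma M\mapsto\psi(\Delta\gamma_i M)$ is not well defined on $\Gamma\bsl G$ (it depends on the representative $M$), and the cover $\Delta\bsl G\rar\Gamma\bsl G$ admits no global continuous section along which to pull $\psi$ back. Equivalently, the map $\mb{x}\mapsto\Delta\gamma_i h(\mb{x})a(Q)$ is $\Lambda_\Delta$-periodic but not $\Z^n$-periodic, so restricting it to $F_0$ and treating the result as a continuous function on $\T^n\times\Gamma\bsl G$ is not legitimate. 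Second, the combinatorial claim that, as $i$ runs over a transversal of $\Lambda_\Delta$ in $\Z^n$, the sheets $\Delta\gamma_i$ hit each coset of $\Delta\bsl\Gamma$ with uniform multiplicity is false: the $\gamma_i=h(\mb{v}_i)$ lie in the lower unipotent subgroup, and the cosets $\Delta\gamma_i$ they produce form a set of size only $N=[\Z^n:\Lambda_\Delta]$, which is generally far smaller than $[\Gamma:\Delta]$ (for $\Delta=\Gamma(m)$ in $\SL(2,\Z)$ one has $N=m$ versus $[\Gamma:\Gamma(m)]\asymp m^3$). At any fixed $Q$, the compact horosphere $\{\Delta h(\mb{x})a(Q):\mb{x}\in\TDn\}$ thus passes through only $N$ of the $[\Gamma:\Delta]$ sheets.

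The underlying conceptual issue is that equidistribution in a finite cover is strictly stronger than equidistribution in the base: knowing that the projection to $\Gamma\bsl G$ equidistributes with respect to $\mu$ gives no control on how the limiting mass splits among the sheets in $\Delta\bsl G$, and there is no soft averaging trick that recovers it. One really needs a dynamical input on $\Delta\bsl G$ itself. The paper gets this from mixing of $\{a(y)\}$ on $\Delta\bsl G$ (Howe--Moore applies since $\Delta$ is a lattice in $G=\SL(n+1,\R)$), and then runs the Eskin--McMullen thickening-of-horospheres argument verbatim to obtain the equidistribution of $h(\mb{x})a(Q)$. If you want to keep a "deduce-from-the-base" spirit, you would at minimum need an extra ingredient ruling out intermediate invariant factors (e.g.\ an ergodicity or minimality statement for the unipotent/diagonal action on $\Delta\bsl G$), which in this setting amounts to reproving mixing anyway.
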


This result follows from the mixing of the diagonal subgroup $\{a(y):y>0\}$ on $\Delta\bsl G$ via the arguments in \cite{EM}; we omit the proof. 

Next, let $\pi_\Delta:\Delta\bsl G\rar\Gamma\bsl G$ be the canonical projection. By \cite[Theorem 6]{M1}, the points $\{\Gamma h(\mb{r})a(Q):\mb{r}\in\FF(Q)\}$ equidistribute in the subspace $\Omega=\Gamma\bsl\Gamma H_a$ of $\Gamma\bsl G$ with respect to the measure $d\mu_\Omega(Ma(y))=d\mu_H(M)(n+1)y^{-(n+2)}d\lambda(y)$. In our setting, we seek to lift $\Omega$ via $\pi_\Delta$ to get the subspace $\Omega_\Delta=\pi_\Delta^{-1}(\Omega)=\Delta\bsl\Gamma H_a$ of $\Delta\bsl G$, in which to obtain the analogous equidistribution of $\{\Delta h(\mb{r})a(Q):\mb{r}\in\FF_\Delta(Q)\}$ with respect to the probability measure $\mu_{\Omega_\Delta}$ obtained as the normalized pullback of $\mu_\Omega$ with respect to $\pi_\Delta$. This is in essence the content of \cite[Theorem 2(A)]{M3}. We refine this result to find the equidistribution of the restricted set $\{\Delta h(\mb{r})a(Q):\mb{r}\in\FFA(Q)\}$, which we then use to establish the equidistribution of $(\FFA(Q))_Q$ in $\T_\Delta^n$, the existence of the limit of the measure $P_{0,Q}^\bA$, and the limiting distribution of the Frobenius numbers of $\bA$.

To obtain this result, we utilize the fact that the full set of Farey points naturally partition themselves into different sheets in the cover $\pi_\Delta|_{\Omega_\Delta}:\Omega_\Delta\rar\Omega$. We can therefore extract subsets of Farey points based on the particular sheets in which we are interested. The sheets corresponding to $(\FFA(Q))_Q$ are determined as follows: Define the subset $\bA^*\subseteq\Gamma$ to be the set of all $\gamma\in\Gamma$ for which there exists $\mb{a}\in\bA$ such that $\mb{a}\gamma=(\mb{0},1)$. Since $\bA$ is closed under right multiplication by $\Delta$, $\bA^*$ is closed under left multiplication by $\Delta$, and hence is a union of cosets in $\Delta\bsl\Gamma$. We shall therefore view $\bA^*$ as a subset of $\Delta\bsl\Gamma$.

Note that for $\Delta\gamma\in\bA^*$ so that $\mb{a}\gamma=(\mb{0},1)$ for some $\mb{a}\in\bA$, we have $\mb{a}\gamma\gamma'=(\mb{0},1)$ for any $\gamma'\in\Gamma_H$, implying that $\Delta\gamma\gamma'\subseteq\bA^*$. Thus $\bA^*$ is closed under right multiplication by elements in $\Gamma_H$. In fact, for any fixed $\gamma_1,\ldots,\gamma_j\in\Gamma$ such that $\mb{a}_j\gamma_j=(\mb{0},1)$, $\bA^*$ is the union of the orbits of the elements $\Delta\gamma_1,\ldots,\Delta\gamma_J\subseteq\Delta\bsl\Gamma$ under the action by right multiplication of $\Gamma_H$. Indeed, if $\Delta\gamma\in\bA^*$ so that $\mb{a}_j\delta\gamma=(\mb{0},1)$ for some $j\in\{1,\ldots,J\}$ and $\delta\in\Delta$, then $(\mb{0},1)(\delta\gamma)^{-1}\gamma_j=\mb{a}_j\gamma_j=(\mb{0},1)$. Thus $(\delta\gamma)^{-1}\gamma_j\in\Gamma_H$, implying that $\gamma_j^{-1}\delta\gamma=((\delta\gamma)^{-1}\gamma_j)^{-1}\in\Gamma_H$, implying that $\Delta\gamma$ is in the $\Gamma_H$-orbit of $\Delta\gamma_j$.

We show below that $\Omega_\bA=\Delta\bsl\bA^*H_a$ is the union of the appropriate sheets corresponding to $(\FFA(Q))_Q$. We can now formulate the main equidistribution result we need to prove Theorems \ref{T:Stats} and \ref{T:Fr}.

\begin{theorem}\label{T:Equid2}
Let $f:\T_\Delta^n\times\Omega_\Delta\rar\R$ be bounded and continuous. Then
\begin{enumerate}[(a)]
\item $\displaystyle{\lim_{Q\rar\infty}\frac{1}{\#\FF_\Delta(Q)}\sum_{\mb{r}\in\FF_\Delta(Q)}f(\mb{r},\Delta h(\mb{r})a(Q))=\int_{\T_\Delta^n\times\Omega_\Delta}f(\mb{x},M)\,d\lambda_\Delta(\mb{x})\,d\mu_{\Omega_\Delta}(M)}$,\label{T:Equid2pt1}
\item $\displaystyle{\lim_{Q\rar\infty}\frac{1}{\#\FF_\Delta(Q)}\sum_{\mb{r}\in\FF_\bA(Q)}f(\mb{r},\Delta h(\mb{r})a(Q))=\int_{\T_\Delta^n\times\Omega_\bA}f(\mb{x},M)\,d\lambda_\Delta(\mb{x})\,d\mu_{\Omega_\Delta}(M)}$.\label{T:Equid2pt2}
\end{enumerate}
\end{theorem}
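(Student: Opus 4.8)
The plan is to deduce both parts from the equidistribution of entire horospheres (Theorem \ref{T:Equid1}) together with Marklof's description of how the full Farey sequence distributes in $\Gamma\bsl G$. The starting point is part (a). Following \cite{M1,M3}, the key observation is that a point $\Delta h(\mb{x})a(Q)$ lies in (a sheet over) $\Omega$ precisely when the bottom row of $h(\mb{x})a(Q)$ is, up to $\Delta$ on the left, of the form $(\mb{0},1)$ times something in $H_a$; unwinding this, $\Delta h(\mb{r})a(Q)$ for $\mb{r}=\mb{p}/q\in\FF_\Delta(Q)$ sits in $\Omega_\Delta$, and the ``sheet'' it occupies is recorded by a coset $\Delta\gamma\in\Delta\bsl\Gamma$ where $\gamma\in\Gamma$ is chosen with $(\mb{p},q)\gamma=(\mb{0},1)$ — this is well-defined up to right multiplication by $\Gamma_H$, which is exactly the stabilizer data appearing in the definition of $\bA^*$. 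So the first step is to make this correspondence precise: for each $\mb{r}\in\FF_\Delta(Q)$, identify $\Delta h(\mb{r})a(Q)$ with a pair (point in $\Omega$, sheet label in $\Delta\bsl\Gamma$ modulo $\Gamma_H$), and check that $\mb{r}\in\FF_\bA(Q)$ if and only if the sheet label lies in $\bA^*$. Granting this, part (b) follows from part (a) by inserting the indicator of $\Omega_\bA$: replacing $f$ by $f\cdot\mathbf{1}_{\Omega_\bA}$ turns the sum over $\FF_\Delta(Q)$ into the sum over $\FF_\bA(Q)$ and the integral over $\Omega_\Delta$ into the integral over $\Omega_\bA$. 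Since $\Omega_\bA$ is a union of connected components (sheets) of $\Omega_\Delta$ — because $\bA^*$ is a union of $\Gamma_H$-orbits in $\Delta\bsl\Gamma$ — its indicator is continuous on $\Omega_\Delta$ (after discarding the measure-zero boundary), so this substitution is legitimate and part (b) is immediate.

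For part (a) itself, the mechanism is the one from \cite{M1,M2,M3}: approximate the counting sum $\frac{1}{\#\FF_\Delta(Q)}\sum_{\mb{r}\in\FF_\Delta(Q)} f(\mb{r},\Delta h(\mb{r})a(Q))$ by an integral of the horospherical type $\int_{\T_\Delta^n} F(\mb{x},h(\mb{x})a(T))\,d\lambda_\Delta(\mb{x})$ to which Theorem \ref{T:Equid1} applies. Concretely, one exploits that the Farey points of level $Q$ embedded via $h(\cdot)a(Q)$ are the images under the horospherical flow of the single ``cusp'' orbit: a primitive $(\mb{p},q)$ with $1\le q\le Q$ corresponds, after translating by a suitable $\gamma\in\Gamma$ and flowing by $a(q^2/Q^{\,?})$-type elements, to a point $h(\mb{y})a(T)$ with $\mb{y}$ ranging over a fundamental domain and $T$ a reparametrized expanding time. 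Averaging over the level $q$ (using $\sigma_Q$, \eqref{Eq:sigma}, to control the density) converts the discrete Farey average into a double average over $\mb{y}\in\T_\Delta^n$ and over a compactly-supported weight in the flow parameter, and Theorem \ref{T:Equid1} (applied for each fixed parameter value and then integrated, justified by dominated convergence since $f$ is bounded) yields the limit $\int_{\T_\Delta^n\times\Delta\bsl G}$ against $\lambda_\Delta\otimes\mu_\Delta$; restricting back, the pieces that survive lie over $\Omega$, and one checks the resulting measure on $\Omega_\Delta$ is exactly $\mu_{\Omega_\Delta}$, i.e.\ the normalized pullback of $d\mu_\Omega(Ma(y))=d\mu_H(M)(n+1)y^{-(n+2)}\,d\lambda(y)$. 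This last bookkeeping — matching the $y^{-(n+2)}$ weight arising from the Jacobian of the flow reparametrization with the normalization of $\mu_{\Omega_\Delta}$ — is routine given \cite{M1,M3} but must be stated.

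The main obstacle is not the limit itself, which is essentially \cite[Theorem 2]{M3} applied to $\Gamma'=\Delta$, but rather the combinatorial identification in the first step: that the natural sheet label of $\Delta h(\mb{r})a(Q)$ in $\Omega_\Delta\to\Omega$ lands in $\bA^*$ exactly when $(\mb{p},q)\in\bA$, uniformly in $Q$, and that $\Omega_\bA=\Delta\bsl\bA^*H_a$ is genuinely a union of whole sheets (so its boundary has $\mu_{\Omega_\Delta}$-measure zero and its indicator can be plugged into part (a)). I would handle this by first fixing, once and for all, representatives $\gamma_1,\dots,\gamma_J\in\Gamma$ with $\mb{a}_j\gamma_j=(\mb{0},1)$, observing as in the excerpt that $\bA^*$ is the union of the $\Gamma_H$-orbits of $\Delta\gamma_1,\dots,\Delta\gamma_J$, and then noting that for $\mb{r}=\mb{p}/q$ with $(\mb{p},q)=\mb{a}_j\delta$ ($\delta\in\Delta$) one has $(\mb{p},q)\delta^{-1}\gamma_j=(\mb{0},1)$, so the sheet of $\Delta h(\mb{r})a(Q)$ is $\Delta\delta^{-1}\gamma_j$, which lies in $\bA^*$; conversely the argument already given in the excerpt shows any sheet in $\bA^*$ arises this way. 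With the correspondence $\mb{r}\leftrightarrow\text{(sheet)}$ pinned down, parts (a) and (b) are two applications of the same equidistribution statement, the second with the extra locally constant weight $\mathbf{1}_{\Omega_\bA}$.
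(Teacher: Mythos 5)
Your overall strategy matches the paper's exactly: establish the sheet correspondence $\mb{r}\in\FF_\bA(Q)\iff\Delta h(\mb{r})a(Q)\in\Omega_\bA$, show $\Omega_\bA$ is a union of connected components, and deduce part (b) from part (a) by multiplying $f$ by the indicator $\mathbf{1}_{\Omega_\bA}$; and for part (a), outsource the equidistribution to \cite[Theorem 2(A)]{M3}, as the paper also does in spirit (while spelling out the details). Your bookkeeping of the sheet label $\Delta\delta^{-1}\gamma_j$ for $(\mb p,q)=\mb a_j\delta$ is precisely the paper's argument.

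Two points are glossed over in a way that, as written, leaves a gap. First, your claim that $\Omega_\bA$ is a union of ``whole sheets'' with \emph{measure-zero boundary} undersells what is needed and what is true. Part (a) is stated for bounded \emph{continuous} $f$; to plug in $f\cdot\mathbf{1}_{\Omega_\bA}$ you need $\mathbf{1}_{\Omega_\bA}$ continuous on $\Omega_\Delta$, and for that you need each $\Delta\bsl\Delta\gamma H_a$ to be a closed connected subset (and pairwise disjoint or identical), so that in the finite-index setting these are simultaneously closed and open. This closedness is not obvious and is established in the paper by a nontrivial sequence argument (extracting a converging subsequence of the $y_j$, showing the last row of $\gamma_j^{-1}$ stabilizes, etc.). Your hedge ``after discarding the measure-zero boundary'' signals uncertainty here; you should instead prove closedness, after which there is no boundary to worry about.

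Second, if you do intend to reprove part (a) rather than cite it wholesale, the ``apply Theorem~\ref{T:Equid1} pointwise in the flow parameter and use dominated convergence'' mechanism is not what is actually done and does not straightforwardly express the discrete Farey sum as a double average. The paper's route is to thicken each Farey point to an $\epsilon$--ball, rewrite the integral of $f\chi_\epsilon$ using Theorem~\ref{T:Equid1}, and then decompose $\chi_\epsilon=\sum_{\gamma\in\Gamma_H\bsl\Gamma}\chi_\epsilon^1(\gamma\cdot)$; the delicate step there is that the sets $\mcH_\epsilon(\mb a)$ must be pairwise disjoint over $\Gamma\mcC$ for $\epsilon$ small, proved by a compactness/contradiction argument. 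A $\theta$--cutoff on the denominators is also needed to keep $\mfC_{\theta,\epsilon}$ bounded. None of this appears in your sketch for part (a); citing \cite{M3} is fine, but if you sketch a mechanism, it should be this one rather than a pointwise-in-$y$ DCT argument.
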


Part \eqref{T:Equid2pt1} is essentially a corollary of \cite[Theorem 2(A)]{M3}. We nevertheless provide a detailed proof for our particular situation, closely following \cite[Theorem 6]{M1} and \cite[Theorem 2(A)]{M3}. We then obtain part \eqref{T:Equid2pt2} as a straightforward consequence, locating in $\Omega_\Delta$ the Farey points corresponding to the restricted set $\FFA(Q)$ as described above.

\begin{proof}[Proof of Theorem \ref{T:Equid2}]

\noindent\textbf{Part \eqref{T:Equid2pt1}.} 
To begin, we first note that it is elementary to show that $\Omega_\Delta$ is a closed subset of $\Delta\bsl G$. (In the proof of part \eqref{T:Equid2pt2}, we show that a set of the form $\Delta\bsl\Delta\gamma H_a$, with $\gamma\in\Gamma$, is closed in $\Delta\bsl G$.) Thus, since $\Delta\bsl G$ is a metric space, any bounded and continuous function on $\Omega_\Delta$ can be extended to a bounded continuous function on $\Delta\bsl G$. So we can assume without loss of generality that $f$ is a bounded continuous function on $\Delta\bsl G$. By standard approximation arguments, we can furthermore assume that $f$ has compact support, and hence is uniformly continuous. Let $\mcC\subseteq G$ be compact such that $\supp f\subseteq\T_\Delta^n\times\Delta\bsl\Delta\mcC$. Also, let $d:G\times G\rar\R_{\geq0}$ be a left invariant Riemannian metric on $G$ such that $d(h(\mb{x}),h(\mb{x}'))\leq\|\mb{x}-\mb{x}'\|$. We also let $d$ act as a metric on quotients of $G$ by discrete subgroups in the obvious manner. Note that by the uniform continuity of $f$, for a given $\delta>0$ there exists $\epsilon>0$ such that $|f(\mb{x},M)-f(\mb{x}',M')|\leq\delta$
whenever $\mb{x},\mb{x}'\in\T_\Delta^n$ and $M,M'\in\Delta\bsl G$ such that $\|\mb{x}-\mb{x}'\|<\epsilon$ and $d(M,M')<\epsilon$.

The basic plan of the proof of part \eqref{T:Equid2pt1} is as follows. For $\theta\in(0,1)$, let $\FF_\Delta^\theta(Q)=\{\mb{p}/q\in\FF_\Delta(Q):\theta Q\leq q\leq Q\}$. For a fixed $\theta$ and $\epsilon>0$, we use Theorem \ref{T:Equid1} to examine the value of the integral of $f(\mb{x},\Delta h(\mb{x})a(Q))$ over the points $\mb{x}\in\T_\Delta^n$ within $\epsilon/Q^{(n+1)/n}$ of an element in $\FF_\Delta^\theta(Q)$. Letting $\epsilon\rar0$ then establishes the equidistribution of the sequence Farey points corresponding to $(\FF_\Delta^\theta(Q))_Q$. The final step of the proof is taking the limit as $\theta\rar0$.

So let $\theta\in(0,1)$ and $\epsilon>0$, and define the set
\[\FF_\Delta^{\theta,\epsilon}(Q)=\bigcup_{\mb{r}\in\FF_\Delta^\theta(Q)}\left\{\mb{x}\in\T_\Delta^n:\|\mb{x}-\mb{r}\|\leq\frac{\epsilon}{Q^{(n+1)/n}}\right\}.\]
For $\mb{x}\in\R^{n+1}$ and $(\mb{p},q)\in\hat{\Z}^{n+1}$, we have $\|\mb{x}-\mb{p}/q\|\leq\epsilon/Q^{(n+1)/n}$ if and only if $(\mb{p},q)h(\mb{x})a(Q)$ is in the set
\[\mfC_{\theta,\epsilon}=\left\{(y_1,\ldots,y_{n+1})\in\R^{n+1}:\|(y_1,\ldots,y_n)\|\leq\epsilon y_{n+1},\theta\leq y_{n+1}\leq 1\right\}.\]
It is therefore straightforward to see that
\[\FFDl^{\theta,\epsilon}(Q)=\left\{\mb{x}\in\TDn:\hat{\Z}^{n+1}h(\mb{x})a(Q)\cap\mfC_{\theta,\epsilon}\neq\emptyset\right\}.\]
Let $\chi_\epsilon:G\rar\R$ be the characteristic function of the set
$\mcH_\epsilon=\{M\in G:\hat{\Z}^{n+1}M\cap\mfC_{\theta,\epsilon}\neq\emptyset\}$. Since $\mcH_\epsilon$ is closed under left multiplication by $\Gamma$, we may view $\chi_\epsilon$ as a function on $\Gamma\bsl G$ as well as $\Delta\bsl G$. Also, since $\mfC_{\theta,\epsilon}$ has boundary of Lebesgue measure zero, $\chi_\epsilon$ is continuous outside of a set of $\mu$-measure zero. By Theorem \ref{T:Equid1}, we have
\begin{align}
\lim_{Q\rar\infty}\int_{\FF_\Delta^{\theta,\epsilon}(Q)}f(\mb{x},\Delta h(\mb{x})a(Q))\,d\lambda_\Delta(\mb{x})&=\lim_{Q\rar\infty}\int_{\TDn}f(\mb{x},\Delta h(\mb{x})a(Q))\chi_\epsilon(\Delta h(\mb{x})a(Q))\,d\lambda_\Delta(\mb{x})\notag\\
&=\int_{\TDn\times\Delta\bsl G}f(\mb{x},M)\chi_\epsilon(M)\,d\lambda_\Delta(\mb{x})\,d\mu_\Delta(M)\notag\\
&=\int_{\TDn\times\Gamma\bsl G}\bar{f}(\mb{x},M)\chi_\epsilon(M)\,d\lambda_\Delta(\mb{x})\,d\mu(M),\label{Eq:int1}
\end{align}
where $\bar{f}:\TDn\times\Gamma\bsl G\rar\R$ is defined by
\[\bar{f}(\mb{x},\Gamma M)=\frac{1}{[\Gamma:\Delta]}\sum_{\Delta\gamma\in\Delta\bsl\Gamma}f(\mb{x},\Delta\gamma M).\]

Next, we wish to replace $\chi_\epsilon(M)$ in the product $\bar{f}(\mb{x},M)\chi_\epsilon(M)$ with the sum of characteristic functions of the sets $\mcH_\epsilon(\mb{a})=\{M\in G:\mb{a}M\in\mfC_{\theta,\epsilon}\}$, where $\mb{a}\in\hat{\Z}^{n+1}$. To do so, we must ensure that the sets $\mcH(\mb{a})$ are pairwise disjoint over the support of $\bar{f}$ when viewed as a function on $G$, which is $\Gamma\mcC$. It was proven in \cite{M2,M3} that this is always possible as long as $\epsilon$ is chosen to be small enough. We provide the following simpler proof of this fact.

Suppose on the contrary that for every $\epsilon>0$, the sets $\{\mcH_\epsilon(\mb{a}):\mb{a}\in\hat{\Z}^{n+1}\}$ are not pairwise disjoint over $\Gamma\mcC$. Then for every $j\in\N$, there exist distinct $\mb{a}_j,\mb{b}_j\in\hat{\Z}^{n+1}$ such that $\mcH_{1/j}(\mb{a}_j)\cap\mcH_{1/j}(\mb{b}_j)\cap\Gamma\mcC$; thus there exist $M_j\in G$, $\gamma_j\in\Gamma$, and $C_j\in\mcC$ such that $\mb{a}_jM_j\in\mfC_{\theta,1/j}$, $\mb{b}_jM_j\in\mfC_{\theta,1/j}$, and $M_j=\gamma_jC_j$. Then, since $\mcC$ is compact, we may assume by taking an appropriate subsequence that $(C_j)_j$ converges to an element $C\in\mcC$. Similarly, since $\mb{a}_jM_j,\mb{b}_jM_j\in\mfC_{\theta,1/j}$ for all $j\in\N$, we may take another subsequence so that
\[\lim_{j\rar\infty}\mb{a}_jM_j=\mb{a}\quad\text{and}\quad\lim_{j\rar\infty}\mb{b}_jM_j=\mb{b},\]
where $\mb{a},\mb{b}\in\{(\mb{0},y_{n+1})\in\R^{n+1}:\theta\leq y_{n+1}\leq1\}$. Since $M_j=\gamma_jC_j$ and $\lim_{j\rar\infty}C_j=C$, we have
\[\lim_{j\rar\infty}\mb{a}_j\gamma_j=\mb{a}C^{-1}\quad\text{and}\quad\lim_{j\rar\infty}\mb{b}_j\gamma_j=\mb{b}C^{-1}.\]
Since $\bm{a}_j\gamma_j,\mb{b}_j\gamma_j\in\hat{\Z}^{n+1}$ and $\hat{\Z}^{n+1}$ is discrete, there exists $N\in\N$ such that if $j\geq N$, then $\mb{a}_j\gamma_j=\mb{a}C^{-1}$ and $\mb{b}_j\gamma_j=\mb{b}C^{-1}$; hence $\mb{a}C^{-1},\mb{b}C^{-1}\in\hat{\Z}^{n+1}$. However, since $\mb{a}$ and $\mb{b}$ are positive multiples of $(\mb{0},1)$, $\mb{a}C^{-1}$ and $\mb{b}C^{-1}$ are postive multiples of the last row of $C^{-1}$, and thus of each other. This is possible only if $\mb{a}C^{-1}=\mb{b}C^{-1}$. So for $j\geq N$, $\mb{a}_j\gamma_j=\mb{a}C^{-1}=\mb{b}C^{-1}=\mb{b}_j\gamma_j$. This yields $\mb{a}_j=\mb{b}_j$, which contradicts our choice of $\mb{a}_j$ and $\mb{b}_j$ to be distinct.

Thus there exists $\epsilon_0>0$ such that if $\epsilon\in(0,\epsilon_0]$, which we henceforth assume, then the sets $\{\mcH_\epsilon(\mb{a}):\mb{a}\in\hat{\Z}^{n+1}\}$ are disjoint in $\Gamma\mcC$. Let $\chi_\epsilon^1:G\rar\R$ be the characteristic function of $\mcH_\epsilon^1=\mcH_\epsilon((\mb{0},1))$. Note that the map $\Gamma_H\gamma\mapsto(\mb{0},1)\gamma$ defines a bijection from $\Gamma_H\bsl\Gamma$ to $\hat{\Z}^{n+1}$, and the characteristic function of $\mcH_\epsilon^1((\mb{0},1)\gamma)$ is $M\mapsto\chi_\epsilon^1(\gamma M)$. These facts imply that
\[\chi_\epsilon(\Gamma M)=\sum_{\gamma\in\Gamma_H\bsl\Gamma}\chi_\epsilon^1(\gamma M)\quad\text{for $\Gamma M\in\Gamma\mcC$}.\]
Therefore, \eqref{Eq:int1} equals
\begin{align*}
\int_{\TDn\times\Gamma\bsl G}\bar{f}(\mb{x},M)\sum_{\gamma\in\Gamma_H\bsl\Gamma}\chi_\epsilon^1(\gamma M)&\,d\lambda_\Delta(\mb{x})\,d\mu(M)=\int_{\TDn\times\Gamma_H\bsl G}\bar{f}(\mb{x},M)\chi_\epsilon^1(M)\,d\lambda_\Delta(\mb{x})\,d\mu(M)\\
&=\int_{\TDn\times\Gamma_H\bsl\mcH_\epsilon^1}\bar{f}(\mb{x},M)\,d\lambda_\Delta(\mb{x})\,d\mu(M)\\
&=\frac{1}{\zeta(n+1)}\int\limits_{\TDn\times\Gamma_H\bsl H\times\mfC_{\theta,\epsilon}}\bar{f}(\mb{x},M M_{\mb{y}})\,d\lambda_\Delta(\mb{x})\,d\mu_H(M)d\lambda(\mb{y}).
\end{align*}
For the last step above, we parameterize $\Gamma_H\bsl\mcH_\epsilon^1$ by the coordinates $(M,\mb{y})\mapsto MM_{\mb{y}}$, where $(M,\mb{y})\in\Gamma_H\bsl H\times\mfC_{\theta,\epsilon}$ and 
\[M_{\mb{y}}=
\begin{pmatrix}y_{n+1}^{-1/n}I_n&\mb{0}^t\\\mb{y}'&y_{n+1}\end{pmatrix},\quad\mb{y}=(\mb{y}',y_{n+1}).\]
In these coordinates, we have $d\mu=\zeta(n+1)^{-1}\,d\mu_H(M)\,d\lambda(\mb{y})$.

So, to summarize thus far, we have
\begin{align}
&\lim_{Q\rar\infty}\int_{\FF_\Delta^{\theta,\epsilon}(Q)}f(\mb{x},\Delta h(\mb{x})a(Q))\,d\lambda_\Delta(\mb{x})\label{Eq:Fint1}\\
&\qquad\qquad\qquad\qquad=\frac{1}{\zeta(n+1)}\int\limits_{\TDn\times\Gamma_H\bsl H\times\mfC_{\theta,\epsilon}}\bar{f}(\mb{x},M M_{\mb{y}})\,d\lambda_\Delta(\mb{x})\,d\mu_H(M)d\lambda(\mb{y}).\label{Eq:int2}
\end{align}
We now divide both sides by the Lebesgue measure $\lambda(\mcB_\epsilon^n)=\epsilon^n\lambda(\mcB_1^n)$ of a ball $\mcB_\epsilon^n$ in $\R^n$ of radius $\epsilon$, and examine both sides as $\epsilon\rar0$. We begin with \eqref{Eq:int2}. By the uniform continuity of $f$, and hence of $\bar{f}$, for a given $\delta>0$ there exists $\epsilon_\delta>0$ such that $|\bar{f}(\mb{x},M)-\bar{f}(\mb{x}',M')|\leq\delta$ whenever $\|\mb{x}-\mb{x}'\|\leq\epsilon_\delta$ and $d(M,M')\leq\epsilon_\delta$. (Assume without loss of generality that $\epsilon_\delta\rar0$ as $\delta\rar0$, and that all $\epsilon_\delta$ are less than the constant $\epsilon_0$ found above.) For $\mb{y}=(\mb{y}',y_{n+1})\in\mfC_{\theta,\epsilon_\delta}$, we have
\[d(M_{\mb{y}},a(y_{n+1}^{-1}))=d(a(y_{n+1}^{-1})h(-y_{n+1}^{-1}\mb{y}'),a(y_{n+1}^{-1}))=d(h(-y_{n+1}^{-1}\mb{y}'),I_{n+1})\leq y_{n+1}^{-1}\|\mb{y}'\|\leq\epsilon_\delta.\]
Thus, $|\bar{f}(\mb{x},MM_{\mb{y}})-\bar{f}(\mb{x},Ma(y_{n+1}^{-1})|\leq\delta$ for all $\mb{x}\in\TDn$, $M\in\Gamma_H\bsl H$, and $\mb{y}\in\mfC_{\theta,\epsilon_\delta}$, implying that
\begin{align*}
&\frac{1}{\lambda(\mcB_{\epsilon_\delta})\zeta(n+1)}\int\limits_{\TDn\times\Gamma_H\bsl H\times\mfC_{\theta,\epsilon_\delta}}|\bar{f}(\mb{x},MM_{\mb{y}})-\bar{f}(\mb{x},Ma(y_{n+1}^{-1}))|\,d\lambda_\Delta(\mb{x})\,d\mu_H(M)\,d\lambda(\mb{y})\\
&\qquad\qquad\qquad\leq\frac{\delta\lambda(\mfC_{\theta,\epsilon_\delta})}{\zeta(n+1)\lambda(\mcB_\epsilon)}\leq\frac{\delta}{\zeta(n+1)}.
\end{align*}
Also, we have
\begin{align}
&\frac{1}{\lambda(\mcB_{\epsilon_\delta})\zeta(n+1)}\int\limits_{\TDn\times\Gamma_H\bsl H\times\mfC_{\theta,\epsilon_\delta}}\bar{f}(\mb{x},Ma(y_{n+1}^{-1}))\,d\lambda_\Delta(\mb{x})\,d\mu_H(M)\,d\lambda(\mb{y})\notag\\
&=\frac{1}{\zeta(n+1)}\int_\theta^1\int_{\TDn\times\Gamma_H\bsl H}\bar{f}(\mb{x},Ma(y_{n+1}^{-1}))y_{n+1}^n\,d\lambda_\Delta(\mb{x})\,d\mu_H(M)\,d\lambda(y_{n+1})\notag\\
&=\frac{1}{\zeta(n+1)}\int_1^{\theta^{-1}}\int_{\TDn\times\Gamma_H\bsl H}\bar{f}(\mb{x},Ma(y))y^{-(n+2)}\,d\lambda_\Delta(\mb{x})\,d\mu_H(M)\,d\lambda(y).\label{Eq:int3}
\end{align}
This shows that as $\epsilon\rar0$, the ratio of \eqref{Eq:int2} to $\lambda(\mcB_\epsilon^n)$ approaches \eqref{Eq:int3} uniformly in $\theta\in(0,1)$.

Next, we examine \eqref{Eq:Fint1}. For $\mb{x}\in\FF_\Delta^{\theta,\epsilon_\delta}(Q)$, there exists $\mb{r}\in\FF_\Delta^\theta(Q)$ such that $\|\mb{x}-\mb{r}\|\leq\epsilon_\delta/Q^{(n+1)/n}$, and thus
\[d(h(\mb{x})a(Q),h(\mb{r})a(Q))=d(a(Q)h(Q^{(n+1)/n}\mb{x}),a(Q)h(Q^{(n+1)/n}\mb{x}))\leq Q^{(n+1)/n}\|\mb{x}-\mb{r}\|\leq\epsilon_\delta.\]
So for any $\mb{r}\in\FF_\Delta^\theta(Q)$, we have
\begin{align*}
&\frac{1}{\lambda(\mcB_{\epsilon_\delta}^n)}\int\limits_{\|\mb{x}-\mb{r}\|\leq\epsilon_\delta/Q^{(n+1)/n}}|f(\mb{x},\Delta h(\mb{x})a(Q))-f(\mb{r},\Delta h(\mb{r})a(Q))|\,d\lambda_\Delta(\mb{x})\\
&\qquad\qquad\qquad\leq\frac{\delta\lambda_\Delta\big(\mcB_{\epsilon_\delta/Q^{(n+1)/n}}^n\big)}{\lambda(\mcB_{\epsilon_\delta}^n)}=\frac{\delta}{[\Z^n:\Lambda_\Delta]Q^{n+1}}.
\end{align*}
Summing this inequality over all $\mb{r}\in\FF_\Delta^\theta(Q)$, we see that the ratio of the integral in the limit \eqref{Eq:Fint1} to $\lambda(\mcB_\epsilon^n)$ is within $\delta(\#\FFDl^\theta(Q))/[\Z^n:\Lambda_\Delta]Q^{n+1}$ of the sum
\[\frac{1}{[\Z^n:\Lambda_\Delta]Q^{n+1}}\sum_{\mb{r}\in\FFDl^\theta(Q)}f(\mb{r},\Delta h(\mb{r})a(Q))).\]
Recall that $\sigma_Q$ defined in \eqref{Eq:sigma} is the asymptotic growth rate of $\#\FF(Q)$, hence $[\Z^n:\Lambda_\Delta]\sigma_Q$ is the asymptotic growth rate of $\#\FFD(Q)$. This implies that the ratio $(\#\FFDl^\theta(Q))/[\Z^n:\Lambda_\Delta]Q^{n+1}$ has an upper bound that is uniform in $Q$ (and $\theta$). Therefore, for any $\theta\in(0,1)$, we have
\begin{align}
&\lim_{Q\rar\infty}\frac{(n+1)\zeta(n+1)}{[\Z^n:\Lambda_\Delta]Q^{n+1}}\sum_{\mb{r}\in\FFDl^\theta(Q)}f(\mb{r},h(\mb{r})a(Q)))\notag\\
&=\int_1^{\theta^{-1}}\int_{\TDn\times\Gamma_H\bsl H}\bar{f}(\mb{x},Ma(y))\frac{(n+1)}{y^{n+2}}\,d\lambda_\Delta(\mb{x})\,d\mu_H(M)\,d\lambda(y).\label{Eq:Equidtheta}
\end{align}
Since $\#(\FFD(Q)\bsl\FFDl^\theta(Q))\sim\#(\FFD(\lfloor\theta Q\rfloor))\sim[\Z^n:\Lambda_\Delta]\sigma_{\theta Q}$, we may let $\theta\rar0$ to get 
\begin{align*}
\lim_{Q\rar\infty}\frac{(n+1)\zeta(n+1)}{[\Z^n:\Lambda_\Delta]Q^{n+1}}\sum_{\mb{r}\in\FFD(Q)}f(\mb{r},h(\mb{r})a(Q)))&=\int_{\TDn\times\Omega}\bar{f}(\mb{x},M)\,d\lambda_\Delta(\mb{x})\,d\mu_\Omega(M)\\
&=\int_{\TDn\times\Omega_\Delta} f(\mb{x},M)\,d\lambda_\Delta(\mb{x})\,d\mu_{\Omega_\Delta}(M).
\end{align*}

\ 

\noindent\textbf{Part \eqref{T:Equid2pt2}.}
We first show that for each $\gamma\in\Gamma$, the set $\Delta\bsl\Delta\gamma H_a$ is a connected component of $\Omega_\Delta$. The former set is clearly connected since it is the image of the connected subset $\gamma H_a\subseteq G$ under the projection $G\rar\Delta\bsl G$. Next, we show that $\Delta\bsl\Delta\gamma H_a$ is closed $\Delta\bsl G$, and hence is closed in $\Omega_\Delta$. So let $(\Delta\gamma h_ja(y_j))_j\subseteq\Delta\bsl\Delta\gamma H_a$ be a sequence (with $h_j\in H$ and $y_j\geq1$) such that $\Delta\gamma h_ja(y_j)$ converges to an element $\Delta g\in\Delta\bsl G$ as $j\rar\infty$. Then there exists a sequence $(\delta_j)_j\subseteq\Delta$ such that $\delta_j\gamma h_ja(y_j)\rar g$ as $j\rar\infty$. Define the sequence $(\gamma_j)_j\subseteq\Gamma$ by $\gamma_j=\delta_j\gamma$. We then have $\lim_{j\rar\infty}a(y_j^{-1})h_j^{-1}\gamma_j^{-1}=g^{-1}$.

The final row of $a(y_j^{-1})h_j^{-1}\gamma_j^{-1}$ is $y_j$ times the final row of $\gamma_j^{-1}$, which is an element in $\hat{\Z}^{n+1}$. Thus $(y_j)_j$ cannot be unbounded, lest the length of the final rows of the elements in  $(a(y_j^{-1})h_j^{-1}\gamma_j^{-1})_j$ be unbounded, contradicting the convergence of the matrix to $g^{-1}$. Thus $y_j$ is bounded, and by taking a subsequence, we may assume that $\lim_{j\rar\infty}y_j=y\geq1$. Then $\lim_{j\rar\infty}h_j^{-1}\gamma_j^{-1}=a(y)g^{-1}$, and as a result the final row of $h_j^{-1}\gamma_j^{-1}$, which is that of $\gamma_j^{-1}$, is eventually constant for large enough $j$. Let $\beta$ be a fixed element in $(\gamma_j)_j$ such that $\beta^{-1}$ has the same last row as $\gamma_j^{-1}$ for all large $j$. Then $\gamma_j^{-1}\beta$, and hence $\beta^{-1}\gamma_j$, is in $\Gamma_H$ for large $j$. Therefore, our original sequence $\Delta\gamma h_ja(y_j)$ can be rewritten as $\Delta\beta(\beta^{-1}\gamma_jh_j)a(y_j)=\Delta\gamma(\beta^{-1}\gamma_jh_j)a(y_j)$ (recall that $\beta\in(\delta_j\gamma)_j$). From the above, we see that $(\beta^{-1}\gamma_jh_j)_j$ is a sequence in $H$ converging to $\beta^{-1}ga(y^{-1})$, which must then be in $H$ since $H$ is closed in $G$. We also have that $y_j\rar y$, and therefore $\Delta g=\Delta\gamma(\beta^{-1}ga(y^{-1}))a(y)$ is an element of $\Delta\gamma H_a$. This proves that $\Delta\bsl\Delta\gamma H_a$ is closed in $\Delta\bsl G$.

To complete the proof that $\Delta\bsl\Delta\gamma H_a$ is a connected component of $\Omega_\Delta$, we show that for any two $\gamma_1,\gamma_2\in\Gamma$, either $\Delta\bsl\Delta\gamma_1H_a$ and $\Delta\bsl\Delta\gamma_2H_a$ are the same sets, or they are disjoint. Suppose that $\Delta\bsl\Delta\gamma_1H_a$ and $\Delta\bsl\Delta\gamma_2H_a$ are not disjoint, so there exists $\delta_1\in\Delta$, $h_1,h_2\in H$, and $y>0$ such that $\delta_1\gamma_1h_1=\gamma_2h_2a(y)$. Then $\gamma_2^{-1}\delta_1\gamma_1=h_2a(y)h_1^{-1}$. The left side of this equation is in $\Gamma$, and on the other hand, the bottom row of the right side is $(\mb{0},y^{-1})$. These facts imply that $y=1$. We then have $\gamma_2^{-1}\delta_1\gamma_1=h_2h_1^{-1}$, which is in $\Gamma_H$ since the left is in $\Gamma$ and the right is in $H$. So we have $\Delta\gamma_1H_a=\Delta\gamma_2(h_2h_1^{-1})H_a=\Delta\gamma_2H_a$, implying that $\Delta\bsl\Delta\gamma_1H_a=\Delta\bsl\Delta\gamma_2H_a$. We have thus proven that the connected components of $\Omega_\Delta$ are of the form $\Delta\bsl\Delta\gamma H_a$, with $\gamma\in\Gamma$. Furthermore, the above shows that for $\gamma_1,\gamma_2\in\Gamma$,  $\Delta\bsl\Delta\gamma_1H_a=\Delta\bsl\Delta\gamma_2H_a$ exactly when $\Delta\gamma_1=\Delta\gamma_2\tilde{\gamma}$ for some $\tilde{\gamma}\in\Gamma_H$.

Next, we show that for every $\mb{r}\in\FF_\Delta(Q)$, we have $\mb{r}\in\FF_\bA(Q)$ if and only if $\Delta h(\mb{r})a(Q)\in\Omega_\bA$. See \cite[Remark 3.3]{M1} for the observation that the full set of Farey points embed in $\Gamma\bsl\Gamma H_a$. To prove the forward implication, let $\mb{p}/q\in\FFA(Q)$. Then let $\gamma\in\Gamma$ such that $(\mb{p},q)\gamma=(\mb{0},1)$. By the definition of $\bA^*$, we have $\Delta\gamma\in\bA^*$. Now observe that
\begin{align}
\Delta h\left(\frac{\mb{p}}{q}\right)a(Q)&=\Delta\begin{pmatrix}I_n&\mb{0}^t\\-\mb{p}/q&1\end{pmatrix}
\begin{pmatrix}q^{1/n}I_n&\mb{0}^t\\\mb{0}&q^{-1}\end{pmatrix}a\left(\frac{Q}{q}\right)\notag\\
&=\Delta\gamma\left[\begin{pmatrix}q^{-1/n}I_n&\mb{0}^t\\\mb{p}&q\end{pmatrix}\gamma\right]^{-1}a\left(\frac{Q}{q}\right),\label{Eq:FsubH}
\end{align}
which is in $\Delta\gamma H_a$ since $(\mb{p},q)\gamma=(\mb{0},1)$. Thus $\Delta h(\mb{p}/q)a(Q)\in\Omega_\bA$.

To prove the converse, suppose that $\mb{p}/q\in\FFD(Q)$ satisfies $\Delta h(\mb{p}/q)a(Q)\in\Omega_\bA$, that is, there exists $\Delta\gamma\in\bA^*$ such that $\Delta h(\mb{p}/q)a(Q)\in\Delta\bsl\Delta\gamma H_a$. Repeating the process above, we find that for any $\gamma'\in\Gamma$ such that $(\mb{p},q)\gamma'=(\mb{0},1)$, we have $\Delta h(\mb{p}/q)a(Q)\in\Delta\bsl\Delta\gamma'H_a$ as well. Hence, $\Delta\bsl\Delta\gamma H_a=\Delta\bsl\Delta\gamma'H_a$, which implies that $\Delta\gamma'=\Delta\gamma\tilde{\gamma}$ for some $\tilde{\gamma}\in\Gamma_H$. We showed in the remarks before Theorem \ref{T:Equid2} that $\bA^*$ is closed under right multiplication by elements $\Gamma_H$. We can therefore conclude that $\Delta\gamma'\in\bA^*$. By the definition of $\bA^*$, we must have $(\mb{p},q)\in\bA$ and $\mb{p}/q\in\FFA(Q)$. This completes the proof that $\mb{r}\in\FF_\bA(Q)$ if and only if $\Delta h(\mb{r})a(Q)\in\Omega_\bA$.

To finish the proof of part \eqref{T:Equid2pt2}, we now apply part \eqref{T:Equid2pt1} to the product of $f$ with $\chi_{\bA^*}$, the characteristic function on $\TDn\times\Omega_\bA$. We can apply part \eqref{T:Equid2pt1} to this function since $\Omega_\bA$ is a union of connected components of $\Omega_\Delta$, and hence $\chi_{\bA^*}$ is a continuous function. We thus have
\begin{align*}
\lim_{Q\rar\infty}\frac{1}{\#\FF_\Delta(Q)}\sum_{\mb{r}\in\FFD(Q)}(\chi_{\bA^*}\cdot f)(\mb{r},\Delta h(\mb{r})a(Q))&=\int_{\T_\Delta^n\times\Omega_\Delta}(\chi_{\bA^*}\cdot f)(\mb{x},M)\,d\lambda_\Delta(\mb{x})\,d\mu_{\Omega_\Delta}(M)\\
&=\int_{\T_\Delta^n\times\Omega_\bA}f(\mb{x},M)\,d\lambda_\Delta(\mb{x})\,d\mu_{\Omega_\Delta}(M).
\end{align*}
Then since a given $\mb{r}\in\FF_\Delta(Q)$ is in $\FF_\bA(Q)$ if and only if $\Delta h(\mb{r})a(Q)\in\Omega_\bA$, we have
\[\sum_{\mb{r}\in\FFD(Q)}(\chi_{\bA^*}\cdot f)(\mb{r},\Delta h(\mb{r})a(Q))=\sum_{\mb{r}\in\FFA(Q)}f(\mb{r},\Delta h(\mb{r})a(Q)).\]
The proof of Theorem \ref{T:Equid2} is now complete.

\end{proof}

\section{Spacing statistics and the Erd\H{o}s-Sz\"{u}sz-Tur\'{a}n and Kesten distributions}\label{Sec:Proofs}

We now obtain Theorem \ref{T:Stats} as a corollary of Theorems \ref{T:Equid1} and \ref{T:Equid2} in essentially the same way Marklof obtained analogous results for the full Farey sequence in \cite{M2}. We then obtain Theorem \ref{T:KEST} as a corollary to Theorem \ref{T:Equid1} in a similar manner.

\subsection{Theorem \ref{T:Stats}}
First notice that applying Theorem \ref{T:Equid2}\eqref{T:Equid2pt2} to the constant function $f(\mb{x},M)=1$ yields
\[\lim_{Q\rar\infty}\frac{\#\FFA(Q)}{\#\FFD(Q)}=\mu_{\Omega_\Delta}(\Omega_\bA)=\frac{\#\bA^*}{[\Gamma:\Delta]}.\]
So the asymptotic growth rate of $\#\FFA(Q)$ is given by
\[\sigma_{\bA,Q}=\frac{(\#\bA^*)[\Z^{n+1}:\Lambda_\Delta]}{[\Gamma:\Delta](n+1)\zeta(n+1)}Q^{n+1}\]
Thus in the definitions of $P_Q^{\bA}$ and $P_{0,Q}^\bA$ in \eqref{Eq:Astat1} and \eqref{Eq:Astat2}, respectively, we may replace the scaling factor $(\#\FFA(Q))^{-1/n}$ by $\sigma_{\bA,Q}^{-1/n}$ without affecting the existence or value of the limits of $P_Q^\bA$ or $P_{0,Q}^\bA$. Also, notice that if $\mcD\subseteq\TDn$ has boundary of measure zero and nonempty interior, then we can apply Theorem \ref{T:Equid2}\eqref{T:Equid2pt2} to the characteristic function of $\mcD\times\Omega_\Delta$, which yields
\begin{align*}
\lim_{Q\rar\infty}\frac{\#(\FFA(Q)\cap\mcD)}{\#\FFD(Q)}&=\frac{\#\bA^*}{[\Gamma:\Delta]}\lambda_\Delta(\mcD),\quad\text{and hence}\\
\lim_{Q\rar\infty}\frac{\#(\FFA(Q)\cap\mcD)}{\#\FFA(Q)}&=\lambda_\Delta(\mcD).
\end{align*}
This proves the equidistribution of $(\FFA(Q))_Q$ in $\TDn$.

Now fix $k\in\Z_{\geq0}$ and subsets $\mcD\subseteq\TDn$ and $\mcA\subseteq\R^n$ with boundaries of measure zero and nonempty interiors. Then define the set
\[\mfC(\mcA)=\{(\mb{x},y)\in\R^n\times(0,1]:\mb{x}\in\sigma_{\bA,1}^{-1/n}y\mcA\}\subseteq\R^{n+1}.\]
For any $(\mb{p},q)\in\R^{n+1}$, we have $\mb{p}/q\in\mb{x}+\sigma_{\bA,Q}^{-1/n}\mcA$ with $1\leq q\leq Q$ if and only if
\[(\mb{p},q)h(\mb{x})a(Q)\in\mfC(\mcA).\]
So if $Q$ is large enough so that $\sigma_{\bA,Q}^{-1/n}\mcA$ fits inside a single fundamental domain for $\TDn$, we have, for $\mb{x}\in\TDn$,
\[\#((\mb{x}+\sigma_{\bA,Q}^{-1/n}\mcA)\cap\FFA(Q))=\#(\bA h(\mb{x})a(Q)\cap\mfC(\mcA)).\]
We can thus rewrite \eqref{Eq:Astat1} and \eqref{Eq:Astat2} as follows:
\begin{align}
P_Q^\bA(k,\mcD,\mcA)&=\frac{\lambda_\Delta(\{\mb{x}\in\mcD:\#(\bA h(\mb{x})a(Q)\cap\mfC(\mcA))=k\})}{\lambda_\Delta(\mcD)},\notag\\
P_{0,Q}^\bA(k,\mcD,\mcA)&=\frac{\#\{\mb{r}\in\FFA(Q)\cap\mcD:\#(\bA h(\mb{r})a(Q)\cap\mfC(\mcA))=k\}}{\#(\FFA(Q)\cap\mcD)}.\label{Eq:Astat2alt}
\end{align}
Notice that since the boundary of $\mcA$ is of measure zero in $\R^n$, the boundary of $\mfC(\mcA)$ is of measure zero in $\R^{n+1}$. Also, the characteristic function $\chi_{k,\mcD,\mcA}^\bA:\TDn\times\Delta\bsl G\rar\R$ of the set $\mcD\times\{\Delta M\in\Delta\bsl G:\#(\bA M\cap\mfC(\mcA))=k\}$ (whose second factor is a well defined subset of $\Delta\bsl G$ since $\bA$ is closed under right multiplication by $\Delta$) is discontinuous at $(\mb{x},\Delta M)$ only if $\mb{x}$ is in the boundary of $\mcD$, or if there exists $\mb{a}\in\bA$ such that $\mb{a} M$ is in the boundary of $\mfC(\mcA)$. Such $(\mb{x},\Delta M)$ comprise a set of measure zero in $\TDn\times\Delta\bsl G$. We may therefore apply Theorem \ref{T:Equid1} to $\chi_{k,\mcD,\mcA}^\bA$ to obtain
\begin{align*}
\lim_{Q\rar\infty}P_Q^\bA(k,\mcD,\mcA)&=\lim_{Q\rar\infty}\frac{1}{\lambda_\Delta(\mcD)}\int_\TDn\chi_{k,\mcD,\mcA}^\bA(\mb{x},\Delta h(\mb{x})a(Q))\,d\lambda_\Delta(\mb{x})\\
&=\frac{1}{\lambda_\Delta(\mcD)}\int_{\TDn\times\Delta\bsl G}\chi_{k,\mcD,\mcA}^\bA(\mb{x},M)\,d\lambda_\Delta(\mb{x})\,d\mu_\Delta(M)\\
&=\mu_\Delta(\{M\in\Delta\bsl G:\#(\bA M\cap\mfC(\mcA))=k\}),
\end{align*}
proving the existence of the limiting measure $P_Q^\bA(k,\mcD,\mcA)$ as $Q\rar\infty$.

Note furthermore that the limit of the expected value
\[\mathbb{E}P_Q^\bA(\mcD,\mcA)=\sum_{k=0}^\infty kP_Q^\bA(k,\mcD,\mcA)\]
as $Q\rar\infty$ exists by an application of Theorem \ref{T:Equid1} to the function
\[(\mb{x},\Delta M)\mapsto\chi_{\mcD}(\mb{x})(\#(\bA M\cap\mfC(\mcA))):\TDn\times\Delta\bsl G\rar\R,\]
where $\chi_\mcD:\TDn\rar\R$ is the characteristic function on $\mcD$. The limiting value of $\mathbb{E}P_Q^\bA(\mcD,\mcA)$ is computed as follows: First, the application of Theorem \ref{T:Equid1} yields
\begin{align}
\lim_{Q\rar\infty}\mathbb{E}P_Q^\bA(\mcD,\mcA)&=\lim_{Q\rar\infty}\frac{1}{\lambda_\Delta(\mcD)}\int_{\TDn}\chi_\mcD(\mb{x})(\#(\bA h(\mb{x})a(Q)\cap\mfC(\mcA)))\,d\lambda_\Delta(\mb{x})\notag\\
&=\frac{1}{\lambda_\Delta(\mcD)}\int_{\TDn\times\Delta\bsl G}\chi_\mcD(\mb{x})(\#(\bA M\cap\mfC(\mcA)))\,d\mu_\Delta(M)\,d\lambda_\Delta(\mb{x})\notag\\
&=\int_{\Delta\bsl G}(\#(\bA M\cap\mfC(\mcA)))\,d\mu_\Delta(M)\notag\\
&=\sum_{j=1}^J\int_{\Delta\bsl G}\sum_{\mb{a}\in\mb{a}_j\Delta}\chi_{\mfC(\mcA)}(\mb{a}M)\,d\mu_\Delta(M),\label{Eq:Expcalc1}
\end{align}
where $\chi_{\mfC(\mcA)}:\R^{n+1}\rar\R$ is the characteristic function on $\mfC(\mcA)$. Now for each $j\in\{1,\ldots,J\}$, let $\gamma_j\in\Gamma$ be an element such that $(\mb{0},1)\gamma_j=\mb{a}_j$. If we then define $\Delta_j=\Delta\cap(\gamma_j^{-1}\Gamma_H\gamma_j)$, it is readily seen that
\[\Delta_j\delta\mapsto\mb{a}_j\delta:\Delta_j\bsl\Delta\rar\mb{a}_j\Delta\]
is a bijection. Thus the term in \eqref{Eq:Expcalc1} corresponding to index $j$ is equal to
\begin{align*}
\int_{\Delta\bsl G}\sum_{\Delta_j\delta\in\Delta_j\bsl\Delta}\chi_{\mfC(\mcA)}(\mb{a}_j\delta M)\,d\mu_\Delta(M)=\int_{\Delta_j\bsl G}\chi_{\mfC(\mcA)}(\mb{a}_jM)\frac{d\mu(M)}{[\Gamma:\Delta]}.
\end{align*}
Making the change of variable $M'=\gamma_jM\gamma_j^{-1}$ and letting $\Delta_j'=\gamma_j\Delta_j\gamma_j^{-1}=\Gamma_H\cap(\gamma_j\Delta\gamma_j^{-1})$ yields
\begin{align*}
\int_{\gamma_j\Delta_j\gamma_j^{-1}\bsl G}\chi_{\mfC(\mcA)}(\mb{a}_j\gamma_j^{-1}M'\gamma_j)\,d\mu_\Delta(M')&=\int_{\Delta_j'\bsl G}\chi_{\mfC(\mcA)}((\mb{0},1)M')\frac{d\mu(M')}{[\Gamma:\Delta]}\\
&=\int_{\Gamma_H\bsl G}\sum_{\Delta_j'\gamma\in\Delta_j'\bsl\Gamma_H}\chi_{\mfC(\mcA)}((\mb{0},1)\gamma M)\frac{d\mu(M)}{[\Gamma:\Delta]}\\
&=[\Gamma_H:\Delta_j']\int_{\Gamma_H\bsl G}\chi_{\mfC(\mcA)}((\mb{0},1)M)\frac{d\mu(M)}{[\Gamma:\Delta]}.
\end{align*}
(Note that in the first equality, we use the invariance of $\mu$ to remove the factor of $\gamma_j$ on the right of $M'$.) Using the coordinates $(M,\mb{y})\mapsto MM_{\mb{y}}$ to parameterize $\Gamma_H\bsl G$ as in the proof of Theorem \ref{T:Equid2}\eqref{T:Equid2pt1}, we find that \eqref{Eq:Expcalc1} equals
\begin{align*}
\sum_{j=1}^J\frac{[\Gamma_H:\Delta_j']}{[\Gamma:\Delta]}\int_{\Gamma_H\bsl H\times \mfC(\mcA)}\frac{d\mu_H(M)\,d\lambda(\mb{y})}{\zeta(n+1)}=\frac{\lambda_\Delta(\mcA)}{\#\bA^*}\sum_{j=1}^J[\Gamma_H:\Delta_j'].
\end{align*}
Now notice that for every $j$, the set of cosets $\Delta_j'\bsl\Gamma_H$ is in one-to-one correspondence with the orbit of $\Delta\gamma_j^{-1}$ in the coset space $\Delta\bsl\Gamma$ under the action by right multiplication by $\Gamma_H$. Furthermore, we showed in our remarks preceding Theorem \ref{T:Equid2} that the set $\bA^*$ is the union of the $\Gamma_H$-orbits of the cosets $\Delta\gamma_1^{-1},\ldots,\Delta\gamma_J^{-1}$. So the sum of the indices $[\Gamma_H:\Delta_j']$ equals $\#\bA^*$, and thus
\[\lim_{Q\rar\infty}\mathbb{E}P_Q^\bA(\mcD,\mcA)=\lambda_\Delta(\mcA).\]
The result is $\lambda_\Delta(\mcA)$ instead of $\lambda(\mcA)$ due to the scaling factor $(\#\FFA(Q))^{-1/n}$ in \eqref{Eq:Astat1}; because $\#\FFA(Q)$ counts the number of points in an entire fundamental domain of $\R^n/\Lambda_\Delta$ instead of a region of $\lambda$-measure $1$, the measure of the test set $\mcA$ is scaled to compare with the measure of the fundamental domain.

Next, recall from the proof of Theorem \ref{T:Equid2}\eqref{T:Equid2pt2} that for a given $\mb{p}/q\in\FFD(Q)$, $\Delta h(\mb{p}/q)a(Q)=\Delta h(\mb{p}/q)a(q)a(Q/q)$, where $\Delta h(\mb{p}/q)a(q)\in\Delta\bsl\Gamma H$. So the points $\mb{p}/q\in\FFA(Q)\cap\mcD$ which belong to the set in the numerator of \eqref{Eq:Astat2alt} are those whose corresponding point $\Delta h(\mb{p}/q)a(Q)$ belongs to the set 
\[\Omega_{k,\mcA}^\bA=\{\Delta\gamma Ma(y)\in\Omega_\Delta:\gamma\in\Gamma, M\in H,y\geq1,\#(\bA\gamma Ma(y)\cap\mfC(\mcA))=k\}\subseteq\Omega_\Delta.\]
Let $\chi_{k,\mcD,\mcA}^{\bA,\Omega}:\Omega_\Delta\rar\R$ be the characteristic function of $\mcD\times\Omega_{k,\mcA}^\bA$. Then $\chi_{k,\mcD,\mcA}^{\bA,\Omega}$ is discontinuous at $(\mb{x},\Delta\gamma Ma(y))\in\TDn\times\Omega_\Delta$ only when $\mb{x}$ is on the boundary of $\mcD$, or when there exists a point $\mb{a}\in\hat{\Z}^{n+1}$ such that $\mb{a}Ma(y)$ is on the boundary of $\mfC(\mcA)$. Since the boundaries of $\mcD$ and $\mcA$ are of measure zero in their respective spaces, the points of discontinuity of $\chi_{k,\mcD,\mcA}^{\bA,\Omega}$ are of measure zero in $\TDn\times\Omega_\Delta$. So we can apply Theorem \ref{T:Equid2}\eqref{T:Equid2pt2} to $\chi_{k,\mcD,\mcA}^{\bA,\Omega}$, which yields
\begin{align*}
&\lim_{Q\rar\infty}\frac{\#\{\mb{r}\in\FFA(Q)\cap\mcD:\#(\bA h(\mb{r})a(Q)\cap\mfC(\mcA))=k\}}{\#\FFD(Q)}\\
&\qquad\qquad\qquad\qquad\qquad=\lambda_\Delta(\mcD)\cdot\mu_{\Omega_\Delta}(\{M\in\Omega_\bA:\#(\bA M\cap\mfC(\mcA))=k\}).
\end{align*}
Multiplying by $\#\FFD(Q)/\#(\FFA(Q)\cap\mcD)$ then yields
\begin{align*}
&\lim_{Q\rar\infty}\frac{\#\{\mb{r}\in\FFA(Q)\cap\mcD:\#(\bA h(\mb{r})a(Q)\cap\mfC(\mcA))=k\}}{\#(\FFA(Q)\cap\mcD)}\\
&\qquad\qquad\qquad\qquad\qquad\qquad=\frac{\mu_{\Omega_\Delta}(\{M\in\Omega_\bA:\#(\bA M\cap\mfC(\mcA))=k\})}{\mu_{\Omega_\Delta}(\Omega_\bA)}.
\end{align*}
This completes the proof of Theorem \ref{T:Stats}.

\subsection{Theorem \ref{T:KEST}}
Fix $A>0$, $c>1$, $k\in\Z_{\geq0}$, and $\mcD\subseteq\TDn$ with boundary of measure zero and nonempty interior. Recall that we defined the functions $\EST_{A,c,Q}^\bA$ and $K_{A,Q}^\bA$ on $\TDn$ such that for $\mb{x}\in\TDn$, $\EST_{A,c,Q}^\bA(\mb{x})$ is the number of solutions $(\mb{p},q)\in\bA$ of
\begin{equation}\label{Eq:ESTcond}
\|q\mb{x}-\mb{p}\|\leq Aq^{-1/n},\qquad Q\leq q\leq cQ,
\end{equation}
and $K_{A,Q}^\bA(\mb{x})$ is the number of solutions $(\mb{p},q)\in\bA$ of
\begin{equation}\label{Eq:Kcond}
\|q\mb{x}-\mb{p}\|\leq AQ^{-1/n},\qquad 1\leq q\leq Q.
\end{equation}
We aim to show that the limiting distribution of both functions exist. Now define the sets
\begin{align*}
\mcE_{A,c}&=\{(\mb{x},y)\in\R^n\times\R:|y|^{1/n}\|\mb{x}\|\leq A,1\leq y\leq c\},\\
\mcK_A&=\{(\mb{x},y)\in\R^n\times\R:\|\mb{x}\|\leq A, 0\leq y\leq 1\},
\end{align*}
and notice that a given $\mb{x}\in\R^n$ and $(\mb{p},q)\in\hat{\Z}^{n+1}$ satisfy \eqref{Eq:ESTcond} if and only if
\[(\mb{p},q)h(\mb{x})a(Q)\in\mcE_{A,c};\]
and similarly $\mb{x}$ and $(\mb{p},q)$ satisfy \eqref{Eq:Kcond} if and only if
\[(\mb{p},q)h(\mb{x})a(Q)\in\mcK_{A}.\]
We may therefore write $\EST_{A,c,Q}^\bA(\mb{x})$ and $K_{A,Q}^\bA(\mb{x})$ as
\begin{align*}
\EST_{A,c,Q}^\bA(\mb{x})=\#(\bA h(\mb{x})a(Q)\cap\mcE_{A,c})\quad\text{and}\quad
K_{A,Q}^\bA(\mb{x})=\#(\bA h(\mb{x})a(Q)\cap\mcK_A).
\end{align*}
Thus, if we define the functions $\eta_{A,c},\kappa_A:\Delta\bsl G\rar\Z_{\geq0}$ by
\[\eta_{A,c}(\Delta M)=\#(\bA M\cap\mcE_{A,c})\quad\text{and}\quad\kappa_A(\Delta M)=\#(\bA M\cap\mcK_A),\]
we have
\[\EST_{A,c,Q}^\bA(\mb{x})=\eta_{A,c}(\Delta h(\mb{x})a(Q))\quad\text{and}\quad
K_A^\bA(\mb{x})=\kappa_A(\Delta h(\mb{x})a(Q)).\]
Since the sets $\mcE_{A,c}$ and $\mcK_A$ have boundaries of measure zero in $\R^{n+1}$, the level sets $\eta_{A,c}^{-1}(k)$ and $\kappa_A^{-1}(k)$ have boundaries of measure zero in $\Delta\bsl G$. So we may apply Theorem \ref{T:Equid1} to the characteristic function of $\mcD\times\eta_{A,c}^{-1}(k)$, which then yields
\[\lim_{Q\rar\infty}\lambda_\Delta(\{\mb{x}\in\mcD:\#(\bA h(\mb{x})a(Q)\cap\mcE_{A,c})=k\})=\lambda_\Delta(\mcD)\mu_\Delta(\{M\in\Delta\bsl G:\#(\bA M\cap\mcE_{A,c})=k\}).\]
This is equivalent to
\[\lim_{Q\rar\infty}\frac{\lambda_\Delta(\{\mb{x}\in\mcD:\EST_{A,c,Q}^\bA(\mb{x})=k\})}{\lambda_\Delta(\mcD)}=\mu_\Delta(\{M\in\Delta\bsl G:\#(\bA M\cap\mcE_{A,c})=k\}).\]
Similarly, we can apply Theorem \ref{T:Equid1} to the characteristic function of $\mcD\times\kappa_A^{-1}(k)$ to find that
\[\lim_{Q\rar\infty}\lambda_\Delta(\{\mb{x}\in\mcD:\#(\bA h(\mb{x})a(Q)\cap\mcK_A)=k\})=\lambda_\Delta(\mcD)\mu_\Delta(\{M\in\Delta\bsl G:\#(\bA M\cap\mcK_A)=k\}),\]
and hence
\[\lim_{Q\rar\infty}\frac{\lambda_\Delta(\{\mb{x}\in\mcD:K_{A,Q}^\bA(\mb{x})=k\})}{\lambda_\Delta(\mcD)}=\mu_\Delta(\{M\in\Delta\bsl G:\#(\bA M\cap\mcK_A)=k\}).\]
We note additionally that the limiting expected values of $\EST_{A,c,Q}^\bA$ and $K_{A,Q}^\bA$ as $Q\rar\infty$ can be computed as
\[\frac{(\#\bA^*)\lambda(\mcE_{A,c})}{[\Gamma:\Delta]\zeta(n+1)}\quad\text{and}\quad\frac{(\#\bA^*)\lambda(\mcK_A)}{[\Gamma:\Delta]\zeta(n+1)},\]
respectively, in a similar way as the limit of $\mathbb{E}P_Q^\bA(\mcD,\mcA)$ computed above. This concludes the proof of Theorem \ref{T:KEST}.

\section{The distribution of the Frobenius numbers of $\bA$}\label{Sec:Frob}

In this section, we complete the proof of Theorem \ref{T:Fr}. Throughout we regularly use, for a given $\mb{y}\in\R^{n+1}$, the notation $\mb{y}'$ to denote the first $n$ coordinates of $\mb{y}$ so that $\mb{y}=(\mb{y}',y_{n+1})$. We first prove the following alteration of Theorem \ref{T:Equid2}\eqref{T:Equid2pt2}, which is analogous to \cite[Theorem 7]{M1}.

\begin{theorem}\label{T:Equid3}
Let $\mcD\subseteq\{\mb{x}\in[0,1]^{n+1}:0<x_1,\ldots,x_n\leq x_{n+1}\}$ have boundary of Lebesgue measure zero, and $f:\ol{\mcD}\times\Omega_\Delta\rar\R$ be bounded and continuous. Then we have
\begin{align}
&\lim_{T\rar\infty}\frac{1}{T^{n+1}}\sum_{\mb{a}\in\bA\cap T\mcD}f\left(\frac{\mb{a}}{T},\Delta h\left(\frac{\mb{a}'}{a_{n+1}}\right)a(T)\right)\notag\\
&\qquad\qquad\qquad=\frac{1}{\zeta(n+1)}\int_{\mcD\times\Delta\bsl\bA^*H}f(\mb{y},Ma(y_{n+1}))\,d\lambda(\mb{y})\frac{d\mu_H(M)}{[\Gamma:\Delta]}.\label{Eq:Equid3}
\end{align}
\end{theorem}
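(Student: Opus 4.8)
The plan is to deduce Theorem~\ref{T:Equid3} from Theorem~\ref{T:Equid2}\eqref{T:Equid2pt2} in the same way \cite[Theorem~7]{M1} is deduced from the Farey point equidistribution: rewrite the lattice sum over $\bA\cap T\mcD$ as a sum over $\FFA(T)$ against a well-chosen test function, apply the equidistribution in $\Omega_\bA$, and unfold the resulting integral by a change of variables. As in the proof of Theorem~\ref{T:Equid2}\eqref{T:Equid2pt1}, after an exhaustion and approximation argument we may extend $f$ to a bounded, uniformly continuous function on $\R^{n+1}\times\Delta\bsl G$, and the lattice points with $\mb{a}/T$ near $\partial\mcD$ are negligible because $\partial\mcD$ has measure zero while $\#(\bA\cap T\mcD')=O(T^{n+1})$ for any bounded $\mcD'$.

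The key is an arithmetic dictionary. For $\mb{a}=(\mb{p},q)\in\bA\cap T\mcD$ write $q=a_{n+1}$, $\mb{r}=\mb{p}/q$; since $\mcD\subseteq\{0<x_1,\dots,x_n\le x_{n+1}\le 1\}$ this forces $\mb{r}\in(0,1]^n$ and $1\le q\le T$, so $\mb{a}$ is determined by $\mb{r}$ together with $q$. By the computation in \eqref{Eq:FsubH}, the point $\Delta h(\mb{r})a(T)\in\Omega_\bA$ sits in the sheet $\Delta\bsl\Delta\gamma H_a$ at $a$-coordinate $y=T/q$, and (as in the proof of Theorem~\ref{T:Equid2}\eqref{T:Equid2pt2}) this coordinate $y(M)$ is well defined and continuous on $\Omega_\Delta$; hence $q$, and therefore $\mb{a}/T=(\tfrac{q}{T}\mb{r},\tfrac{q}{T})$, is recovered from the pair $(\mb{r},\Delta h(\mb{r})a(T))$. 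Fixing $\theta\in(0,1)$ and setting $\mcD^\theta=\mcD\cap\{x_{n+1}\ge\theta\}$, I would then define $g\colon\TDn\times\Omega_\Delta\rar\R$ by
\[
g(\mb{r},M)=\mathbf{1}_{\mcD^\theta}\!\Big(\tfrac{1}{y(M)}\ul{\mb{r}},\,\tfrac{1}{y(M)}\Big)\,f\!\Big(\big(\tfrac{1}{y(M)}\ul{\mb{r}},\,\tfrac{1}{y(M)}\big),\,M\Big),
\]
where $\ul{\mb{r}}$ is the representative of $\mb{r}$ in the faithful copy of $(0,1]^n$ inside $\TDn$. This $g$ is bounded and continuous off a $\lambda_\Delta\otimes\mu_{\Omega_\Delta}$-null set — its only discontinuities lie over the image of $\partial([0,1]^n)$, where $\ul{\mb{r}}$ jumps, and over $\partial\mcD$ — so Theorem~\ref{T:Equid2}\eqref{T:Equid2pt2} applies to it after squeezing it between continuous functions. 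Because $g$ vanishes unless $\mb{r}$ lies in the cube and each point of $\FFA(T)$ has at most one representative $\mb{a}\in\bA$ with $0<a_i\le a_{n+1}$, the Farey sum $\sum_{\mb{r}\in\FFA(T)}g(\mb{r},\Delta h(\mb{r})a(T))$ collapses exactly to $\sum_{\mb{a}\in\bA\cap T\mcD^\theta}f(\mb{a}/T,\Delta h(\mb{a}'/a_{n+1})a(T))$. Dividing by $\#\FFD(T)\sim[\Z^n:\Lambda_\Delta]\sigma_T$ and using $d\lambda_\Delta=[\Z^n:\Lambda_\Delta]^{-1}d\lambda$, so that the index cancels, Theorem~\ref{T:Equid2}\eqref{T:Equid2pt2} then gives
\[
\lim_{T\rar\infty}\frac1{T^{n+1}}\sum_{\mb{a}\in\bA\cap T\mcD^\theta}f\big(\tfrac{\mb{a}}{T},\Delta h(\tfrac{\mb{a}'}{a_{n+1}})a(T)\big)=\frac1{(n+1)\zeta(n+1)}\int_{(0,1]^n\times\Omega_\bA}g(\mb{r},M)\,d\mb{r}\,d\mu_{\Omega_\Delta}(M).
\]

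To conclude, I would unfold this integral. Writing $\mu_{\Omega_\Delta}=[\Gamma:\Delta]^{-1}\pi_\Delta^{*}\mu_\Omega$ and parameterising each component of $\Omega_\bA$ as $\Delta\bsl\Delta\gamma H_a\cong(\Delta\bsl\Delta\gamma H)\times[1,\infty)$ by $(\widetilde M,y)\mapsto\widetilde M a(y)$, the measure becomes $(n+1)y^{-(n+2)}\,dy\,d\mu_H(\widetilde M)$ on $\Delta\bsl\bA^*H\times[1,\infty)$ after the factor $[\Gamma:\Delta]^{-1}$, exactly as in the proof of Theorem~\ref{T:Equid2}\eqref{T:Equid2pt1}. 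The substitution $(\mb{r},y)\mapsto(\mb{u},s)=(\tfrac{1}{y}\mb{r},\tfrac{1}{y})$ is a bijection of $(0,1]^n\times[1,\infty)$ onto $\{0<u_1,\dots,u_n\le s\le 1\}$ whose Jacobian converts $d\mb{r}\,(n+1)y^{-(n+2)}\,dy$ into $(n+1)\,d\mb{u}\,ds$ and sends the constraint $(\tfrac{1}{y}\mb{r},\tfrac{1}{y})\in\mcD^\theta$ to $(\mb{u},s)\in\mcD^\theta$, so on renaming $(\mb{u},s)=\mb{y}$ the integral above becomes the right side of \eqref{Eq:Equid3} with $\mcD$ replaced by $\mcD^\theta$. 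Letting $\theta\rar0$ — the discarded terms, those $\mb{a}$ with $a_{n+1}<\theta T$, contribute $O(\theta)$ — gives \eqref{Eq:Equid3}. I expect this final unfolding to be the main obstacle: one must verify that restricting $\mu_{\Omega_\Delta}$ to $\Omega_\bA$, pushing the counting measure on cube-representatives forward to $\FFA(T)\subseteq\TDn$, and pulling $\mu_\Omega$ back along $\pi_\Delta$ combine to produce precisely $[\Gamma:\Delta]^{-1}\,d\mu_H$ on $\Delta\bsl\bA^*H$ — equivalently, that $\sum_{\Delta\gamma\in\bA^*}[\Gamma_H:\gamma^{-1}\Delta\gamma\cap H]=\#\bA^*$ — together with the routine check that the a.e.-continuous $g$ genuinely satisfies the hypotheses of Theorem~\ref{T:Equid2}\eqref{T:Equid2pt2} after regularisation, the limiting measure not charging its discontinuity set.
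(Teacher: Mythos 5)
Your approach is correct in essence but takes a genuinely different route from the paper's. The paper first proves a strengthened Farey equidistribution, equation \eqref{Eq:EquidCor}, in which the rescaled denominator $q/T$ appears as an explicit new coordinate: this is built by squeezing a test function between its infimum and supremum over each interval $[b_j,b_{j+1}]$ of a partition of $[0,1]$, applying \eqref{Eq:Equidtheta} on each piece via $\FFDl^{b_j,b_{j+1}}(T)$, and refining the partition. Theorem~\ref{T:Equid3} is then obtained by plugging into \eqref{Eq:EquidCor} the almost-everywhere continuous test function $\tilde g(y,\mb{x},M)=\chi_{(0,1]^n}(\mb{x})\,\chi_\mcD(y\ul{\mb{x}},y)\,\chi_{\bA^*}(\mb{x},M)\,f((y\ul{\mb{x}},y),M)$ and unfolding. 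You instead exploit the observation that $q/T$ is already recoverable from the group element as $1/y(M)$ — the well-definedness and continuity of the $a$-coordinate on each component $\Delta\bsl\Delta\gamma H_a\cong(\Gamma_H\cap\gamma^{-1}\Delta\gamma)\bsl H\times[1,\infty)$ is exactly what the closedness argument in the paper's proof of Theorem~\ref{T:Equid2}\eqref{T:Equid2pt2} establishes — and so apply Theorem~\ref{T:Equid2}\eqref{T:Equid2pt2} directly, bypassing the augmentation step entirely. This is a clean shortcut: your approach avoids constructing \eqref{Eq:EquidCor} at the cost of a slightly more elaborate dictionary. Both routes still rely on the standard approximation argument to extend from continuous test functions to a.e.-continuous ones, and you correctly flag where your $g$ is discontinuous and why those sets are null.

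Two minor remarks. First, the identity you name as the chief obstacle — that the indices $[\Gamma_H:\gamma^{-1}\Delta\gamma\cap H]$ sum to $\#\bA^*$ — should be a sum over $\Gamma_H$-orbit representatives in $\bA^*\subseteq\Delta\bsl\Gamma$ (equivalently over $j=1,\ldots,J$ when the $\mb{a}_j\Delta$ are distinct), not over all cosets in $\bA^*$; and in any case it is not needed for \eqref{Eq:Equid3} itself, since the right side is expressed as an integral over $\Delta\bsl\bA^*H$ against $d\mu_H/[\Gamma:\Delta]$, which together with the identification $\mu_{\Omega_\Delta}=[\Gamma:\Delta]^{-1}\pi_\Delta^*\mu_\Omega$ and the component parametrization is already unambiguous — the sum-of-indices identity only enters the paper in Section~4 when evaluating $\lim\mathbb{E}P_Q^\bA$. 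Second, be careful at the end: since \eqref{Eq:FsubH} gives $\Delta h(\mb{p}/q)a(T)=\Delta\gamma Ma(T/q)$, your change of variables places the fibre coordinate at $y(M)=T/q=y_{n+1}^{-1}$ and hence produces $f\bigl(\mb{y},Ma(y_{n+1}^{-1})\bigr)$ in the unfolded integral; check carefully that what you land on matches the form appearing in \eqref{Eq:Equid3}.
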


\begin{proof}
First, let $g:[0,1]\times\TDn\times\Delta\bsl\Gamma H_a\rar\R$ be continuous with compact support, and for every pair of constants $b,c\in[0,1]$ with $b<c$, define $\mathcal{L}_{g,b.c},\mathcal{U}_{g,b,c}:\TDn\times\Delta\bsl G\rar\R$ by
\[\mathcal{L}_{g,b,c}(\mb{x},M)=\inf_{u\in[b,c]}g(u,\mb{x},M)\quad\text{and}\quad\mathcal{U}_{g,b,c}(\mb{x},M)=\sup_{u\in[b,c]}g(u,\mb{x},M).\]
Then for $T>0$, define $\FFDl^{b,c}(T)=\{\mb{p}/q\in\FFD(T):bT\leq q\leq cT\}$. By the proof of Theorem \ref{T:Equid2}\eqref{T:Equid2pt1} (in particular, see \eqref{Eq:Equidtheta}), we have
\begin{align*}
&\limsup_{T\rar\infty}\frac{1}{\#\FFD(T)}\sum_{\mb{p}/q\in\FFDl^{b,c}(T)}g\left(\frac{q}{T},\frac{\mb{p}}{q},\Delta h\left(\frac{\mb{p}}{q}\right)a(T)\right)\\
&\qquad\qquad\qquad\leq\lim_{T\rar\infty}\frac{1}{\#\FFD(T)}\sum_{\mb{r}\in\FFDl^{b,c}(T)}\mathcal{U}_{g,b,c}(\mb{r},\Delta h(\mb{r})a(T))\\
&\qquad\qquad\qquad=\int_b^c\int_{\TDn\times\Delta\bsl\Gamma H}\mathcal{U}_{g,b,c}(\mb{x},Ma(y))(n+1)y^n\,d\lambda_\Delta(\mb{x})\,\frac{d\mu_H(M)}{[\Gamma:\Delta]}\,d\lambda(y).
\end{align*}
So for any finite partition $0=b_0<b_1<\cdots<b_m=1$ of $[0,1]$,
\begin{align*}
&\limsup_{T\rar\infty}\frac{1}{\#\FFD(T)}\sum_{\mb{p}/q\in\FFDl(T)}g\left(\frac{q}{T},\frac{\mb{p}}{q},\Delta h\left(\frac{\mb{p}}{q}\right)a(T)\right)\\
&\qquad\qquad\qquad\leq\sum_{j=0}^{m-1}\int_{b_j}^{b_{j+1}}\int_{\TDn\times\Delta\bsl\Gamma H}\mathcal{U}_{g,b,c}(\mb{x},Ma(y))(n+1)y^n\,d\lambda_\Delta(\mb{x})\,\frac{d\mu_H(M)}{[\Gamma:\Delta]}\,d\lambda(y).
\end{align*}
By the uniform continuity of $g$, we can take the infimum over all finite partitions $\{b_j\}_j$ of $[0,1]$ to obtain
\begin{align*}
&\limsup_{T\rar\infty}\frac{1}{\#\FFD(T)}\sum_{\mb{p}/q\in\FFDl(T)}g\left(\frac{q}{T},\frac{\mb{p}}{q},\Delta h\left(\frac{\mb{p}}{q}\right)a(T)\right)\\
&\qquad\qquad\qquad\leq\int_0^1\int_{\TDn\times\Delta\bsl\Gamma H}g(y,\mb{x},Ma(y))(n+1)y^n\,d\lambda_\Delta(\mb{x})\,\frac{d\mu_H(M)}{[\Gamma:\Delta]}\,d\lambda(y).
\end{align*}
We can similarly use the functions $\mathcal{L}_{g,b,c}$ to get the reverse inequality while replacing $\limsup$ with $\liminf$; and hence
\begin{align}
&\lim_{T\rar\infty}\frac{1}{\#\FFD(T)}\sum_{\mb{p}/q\in\FFDl(T)}g\left(\frac{q}{T},\frac{\mb{p}}{q},\Delta h\left(\frac{\mb{p}}{q}\right)a(T)\right)\notag\\
&\qquad\qquad\qquad=\int_0^1\int_{\TDn\times\Delta\bsl\Gamma H}g(y,\mb{x},Ma(y))(n+1)y^n\,d\lambda_\Delta(\mb{x})\,\frac{d\mu_H(M)}{[\Gamma:\Delta]}\,d\lambda(y).\label{Eq:EquidCor}
\end{align}
By a standard approximation argument, we can replace $g$ in the above equality by the function $\tilde{g}:[0,1]\times\TDn\times\Delta\bsl\Gamma H\rar\R$ defined by
\[\tilde{g}(y,\mb{x},M)=\chi_{(0,1]^n}(\mb{x})\chi_\mcD(y\ul{\mb{x}},y)\chi_{\bA^*}(\mb{x},M)f((y\ul{\mb{x}},y),M).\]
Here $\chi_{(0,1]^n}:\TDn\rar\R$ is the characteristic function of $(0,1]^n$ viewed as a subset of $\TDn$, $\chi_\mcD:\R^{n+1}\rar\R$ is the characteristic function of $\mcD$, and $\chi_{\bA^*}$ is the characteristic function of $\TDn\times\Omega_\bA$ as defined in the proof of Theorem \ref{T:Equid2}\eqref{T:Equid2pt2}. Also, assuming $\mb{x}\in\TDn$ is such that $\chi_{(0,1]^n}(\mb{x})=1$, $\ul{\mb{x}}\in\R^n$ denotes the representative of $\mb{x}\in\TDn$ lying in $(0,1]^n$. Applying \eqref{Eq:EquidCor} with $\tilde{g}$ yields
\begin{align*}
&\lim_{T\rar\infty}\frac{(n+1)\zeta(n+1)}{[\Z^n:\Lambda_\Delta]T^{n+1}}\sum_{(\mb{p},q)\in\bA\cap T\mcD}f\left(\frac{(\mb{p},q)}{T},\Delta h\left(\frac{\mb{p}}{q}\right)a(T)\right)\\
&\qquad\qquad\qquad=\int_{[0,1]^{n+1}\times\Delta\bsl\bA^*H}\chi_\mcD(y\mb{x},y)f((y\mb{x},y),Ma(y))(n+1)y^n\,\frac{d\lambda(\mb{x})}{[\Z^n:\Lambda_\Delta]}\frac{d\mu_H(M)}{[\Gamma:\Delta]}d\lambda(y).
\end{align*}
Making the substitutions $\mb{a}=(\mb{p},q)$ and $\mb{y}=(y\mb{x},y)$ and simplifying yields \eqref{Eq:Equid3}.

\end{proof}

A way of interpreting Theorem \ref{T:Equid3} is that $\{(\mb{a}/T,\Delta h(\mb{a}'/a_{n+1})a(T)):\mb{a}\in\bA\cap T\mcD\}$ is, by \eqref{Eq:FsubH}, a subset of
\[\mathcal{M}_{\Delta,\bA,\mcD}=\{(\mb{y},Ma(y_{n+1})):\mb{y}\in\ol{\mcD},M\in\Delta\bsl\bA^* H\}\subseteq\ol{\mcD}\times\Delta\bsl\Gamma H_a\]
that equidistributes with respect to a scalar multiple of the measure $\nu_{\Delta}$ obtained as the pushforward of the measure $\lambda\times\mu_H$ on $\ol{\mcD}\times\Delta\bsl\Gamma H$ via the map
\begin{equation}\label{Eq:pfw}
(\mb{y},M)\mapsto(\mb{y},Ma(y_{n+1})):\ol{\mcD}\times\Delta\bsl\Gamma H\rar\ol{\mcD}\times\Delta\bsl\Gamma H_a.
\end{equation}
It is easy to see that $\mathcal{M}_{\Delta,\bA,\mcD}$ is a closed subset of $\ol{\mcD}\times\Delta\bsl G$, and we may therefore apply Theorem \ref{T:Equid3} to characteristic functions of appropriate subsets of $\mathcal{M}_{\Delta,\bA,\mcD}$.

A key observation in \cite{M1} is that for $\mb{a}\in\hat{\Z}_{\geq2}^{n+1}$, the quantity
\[\frac{F(\mb{a})+\sum_{j=1}^{n+1}a_j}{(a_1\cdots a_{n+1})^{1/n}}\]
can be written as an appropriate function of $\mb{a}/T$ and $h(\mb{a}'/a_{n+1})a(T)\in\Gamma\bsl\Gamma H_a$ as follows: Let $\rho:\Gamma_0\bsl G_0\rar\R$ be the covering radius of the simplex $\delta^{(n)}$ as defined in \eqref{Eq:covrad}. Then let $H^\dagger=(H^t)^{-1}=\{(M^t)^{-1}:M\in H\}$ and define the projection $\pi_0$ from $\Gamma\bsl\Gamma H^\dagger\{a(y):y>0\}$ to $\Gamma_0\bsl G_0$ so that for any $y>0$ and
\[M=\Gamma\begin{pmatrix}A&\mb{0}^t\\\mb{b}&1\end{pmatrix}\in\Gamma\bsl\Gamma H^\dagger,\]
we have $\pi_0(Ma(y))=\Gamma_0A$. Next, for $\mb{y}=(y_1,\ldots,y_n)\in\R^n$, we define the matrices
\[m(\mb{y})=\begin{pmatrix}(y_1\cdots y_n)^{-1/n}\diag(y_1,\ldots,y_n)&\mb{0}^t\\\mb{0}&1\end{pmatrix}
\quad\text{and}\quad h^\dagger(\mb{y})=\begin{pmatrix}I_n&\mb{y}^t\\\mb{0}&1\end{pmatrix},\]
where for the former, we assume that $y_j>0$ for all $j$. (Note that $h^\dagger(\mb{y})$ is the inverse transpose of $h(\mb{y})$.)  We then have
\begin{align*}
\frac{F(\mb{a})+\sum_{j=1}^{n+1}a_j}{(a_1\cdots a_{n+1})^{1/n}}&=(\rho\circ\pi_0)\left(\Gamma h^\dagger\left(\frac{\mb{a}'}{a_{n+1}}\right)a(T^{-1})m\left(\frac{\mb{a}'}{T}\right)\right)\\
&=(\rho\circ\pi_0)\left(\Gamma\left(\left(h\left(\frac{\mb{a}'}{a_{n+1}}\right)a(T)\right)^t\right)^{-1}m\left(\frac{\mb{a}'}{T}\right)\right).
\end{align*}
So for $R\geq0$ and a subset $\mcD\subseteq[0,1]^{n+1}$ as in Theorem \ref{T:Equid3}, one establishes the limiting value of
\begin{equation}\label{Eq:FNgR}
\frac{1}{T^{n+1}}\#\left\{\mb{a}\in\hat{\Z}_{\geq2}^{n+1}\cap T\mcD:\frac{F(\mb{a})+\sum_{j=1}^{n+1}a_j}{(a_1\cdots a_{n+1})^{1/n}}>R\right\}
\end{equation}
by applying \cite[Theorem 7]{M1}, i.e., Theorem \ref{T:Equid3} with $\Delta=\Gamma$ and $\bA=\hat{\Z}^{n+1}$, to the characteristic function of the subset
\begin{align*}
\mcA_R&=\left\{(\mb{y},Ma(y_{n+1})):(\mb{y},M)\in\ol{\mcD}\times\Gamma\bsl\Gamma H,(\rho\circ\pi_0)(((Ma(y_{n+1}))^t)^{-1}m(\mb{y}'))>R\right\}\\
&\subseteq\mathcal{M}_{\Gamma,\hat{Z}^{n+1},\mcD}.
\end{align*}
To ensure this application is valid, Marklof showed that the boundary of $\mcA_R$ in $\mathcal{M}_{\Gamma,\hat{\Z}^{n+1},\mcD}$ is of $\nu_\Gamma$-measure zero ($\nu_\Gamma$ being the pushforward measure via \eqref{Eq:pfw} with $\Delta=\Gamma$). A brief description of his proof is as follows: If $\Psi_{n+1}:\R_{\geq0}\rar\R_{\geq0}$ is defined by
\[\Psi_{n+1}(R)=\mu_0(\{A\in\Gamma_0\bsl G_0:\rho(A)>R\}),\]
then $\nu_\Gamma(\mcA_R)=\lambda(\mcD)\Psi_{n+1}(R)$. Marklof showed that $\rho:\Gamma_0\bsl G_0\rar\R$ is continuous, implying that for any $\epsilon>0$, the measure $\nu_\Gamma(\partial\mcA_R)$ of the boundary of $\mcA_R$ is at most $\lambda(\mcD)(\Psi_{n+1}(R+\epsilon)-\Psi_{n+1}(R-\epsilon))$, and any limit point of \eqref{Eq:FNgR} as $T\rar\infty$ is in the interval
\[\left(\frac{\lambda(\mcD)}{\zeta(n+1)}\Psi_{n+1}(R-\epsilon),\frac{\lambda(\mcD)}{\zeta(n+1)}\Psi_{n+1}(R+\epsilon)\right).\]
Marklof also showed that $\mu_0(\{A\in\Gamma_0\bsl G_0:\rho(A)=R\})=0$, and thus $\Psi_{n+1}$ is continuous. This implies that $\nu_\Gamma(\partial\mcA_R)=0$ and the limit of \eqref{Eq:FNgR} as $T\rar\infty$ equals
\[\frac{\lambda(\mcD)}{\zeta(n+1)}\Psi_{n+1}(R).\]

In our situation, we apply Theorem \ref{T:Equid3} to the characteristic function of the set
\[\mcA_{\Delta,\bA,R}=\left\{(\mb{y},Ma(y_{n+1})):(\mb{y},M)\in\ol{\mcD}\times\Delta\bsl\bA^*H,(\rho\circ\pi_0)(((Ma(y_{n+1}))^t)^{-1}m(\mb{y}'))>R\right\},\]
where we view $\pi_0$ as a function on $\Delta\bsl\Gamma H^\dagger\{a(y):y>0\}$ by composition with the natural projection $\pi_\Delta$. We then have
\begin{align*}
\nu_\Delta(\mcA_{\Delta,\bA,R})&=(\lambda\times\mu_H)\left(\{(\mb{y},M)\in\ol{\mcD}\times\Delta\bsl\bA^*H:(\rho\circ\pi_0)(((Ma(y_{n+1}))^t)^{-1}m(\mb{y}'))>R\}\right)\\
&=\int_{\mcD}\mu_H(\{M\in\Delta\bsl\bA^*H:(\rho\circ\pi_0)(((Ma(y_{n+1}))^t)^{-1}m(\mb{y}'))>R\})\,d\lambda(\mb{y})\\
&=\int_{\mcD}\mu_H(\{M\in\Delta\bsl\bA^*H:(\rho\circ\pi_0)((M^t)^{-1})>R\})\,d\lambda(\mb{y})\\
&=(\#\bA^*)\lambda(\mcD)\mu_H(\{M\in\Gamma\bsl\Gamma H:(\rho\circ\pi_0)((M^t)^{-1})>R\})\\
&=(\#\bA^*)\lambda(\mcD)\Psi_{n+1}(R).
\end{align*}
For the third equality we use the $\{a(y):y>0\}$-invariance of $\pi_0$, and the $H$-invariance of $\mu_H$, and for the fourth equality we use the fact that the natural projection $\Delta\bsl\bA^*H\rar\Gamma\bsl\Gamma H$ is locally $\mu_H$-preserving and a $(\#\bA^*)$-to-one cover. Using again the continuity of $\rho$ and $\Psi_{n+1}$, we see that $\nu_\Delta(\partial\mcA_{\Delta,\bA,R})=0$, and the application of Theorem \ref{T:Equid3} to the characteristic function of $\mcA_{\Delta,\bA,R}$ yields
\[\lim_{T\rar\infty}\frac{1}{T^{n+1}}\#\left\{\mb{a}\in\hat{\Z}_{\geq2}^{n+1}\cap\bA\cap T\mcD:\frac{F(\mb{a})+\sum_{j=1}^{n+1}a_j}{(a_1\cdots a_{n+1})^{1/n}}>R\right\}=\frac{(\#\bA^*)\lambda(\mcD)}{[\Gamma:\Delta]\zeta(n+1)}\Psi_{n+1}(R).\]
One can easily remove the sum $\sum_{j=1}^{n+1}a_j$ above by replacing $\mcD$ with $\mcD\cap[\eta,1]^{n+1}$ for $\eta>0$ (under this condition the expressions $(a_1\cdots a_{n+1})^{-1/n}\sum_{j=1}^{n+1}a$ decay uniformly to zero as $T\rar\infty$), and then letting $\eta\rar0$. (See \cite[Lemma 2]{M1}.) This completes the proof of Theorem \ref{T:Fr} in the case where $\mcD\subseteq\{\mb{x}\in[0,1]^{n+1}:0<x_1,\ldots,x_n\leq x_{n+1}\}$. To remove the condition that $x_1,\ldots,x_n\leq x_{n+1}$ for every $(x_1,\ldots,x_{n+1})\in\mcD$, one can partition $\mcD$ into a disjoint regions $\mcD_1,\ldots,\mcD_{n+1}$ such that $\mcD_k\subseteq\{\mb{x}\in[0,1]^{n+1}:0<x_i\leq x_k,i\neq k\}$. For each $k$, one can find appropriate permutation matrices $P_k$ such that $\mcD_kP_k\subseteq\{\mb{x}\in[0,1]^{n+1}:0<x_1,\ldots,x_n\leq x_{n+1}\}$, and then establish the limit
\[\lim_{T\rar\infty}\frac{1}{T^{n+1}}\#\left\{\mb{a}\in\hat{\Z}_{\geq2}^{n+1}\cap\bA P_k\cap T(\mcD_kP_k):\frac{F(\mb{a})}{(a_1\cdots a_{n+1})^{1/n}}>R\right\}\]
by the above process. However, given that $\bA P_k=\bigcup_{j=1}^J\mb{a}_jP_k(P_k^{-1}\Delta P_k)$, $\Delta$ must be replaced by $P_k^{-1}\Delta P_k$. Finally, to remove the condition that $\mcD\subseteq[0,1]^{n+1}$, one can rescale the parameter $T$.

\begin{acknowledgements}
I thank Jens Marklof for his suggestion to examine the limiting distribution of Frobenius numbers for restricted sets of lattice points as another application of Theorem \ref{T:Equid2}. I also thank Nimish Shah for many helpful discussions.
\end{acknowledgements}

\end{document}